\DeclareFontFamily{U}{wncy}{}
    \DeclareFontShape{U}{wncy}{m}{n}{<->wncyr10}{}
    \DeclareSymbolFont{mcy}{U}{wncy}{m}{n}
    \DeclareMathSymbol{\Sh}{\mathord}{mcy}{"58}
\DeclareMathOperator{\Aut}{Aut}
\DeclareMathOperator{\Char}{char}
\DeclareMathOperator{\disc}{disc}
\DeclareMathOperator{\GL}{GL}
\DeclareMathOperator{\Hom}{Hom}
\DeclareMathOperator{\ord}{ord}
\DeclareMathOperator{\PGL}{PGL}
\DeclareMathOperator{\red}{red}
\DeclareMathOperator{\Res}{Res}
\DeclareMathOperator{\Spec}{Spec}
\DeclareMathOperator{\Supp}{Supp}
\newcommand{\ri}[1]{\mathfrak{o}_{#1}}
\newcommand{\isom}{\cong}
\newcommand{\QED}{\hspace{\stretch{1}} $\begin{lem}acksquare$}
\newcommand{\CC}{\mathbb{C}}
\newcommand{\FF}{\mathbb{F}}
\newcommand{\PP}{\mathbb{P}}
\newcommand{\QQ}{\mathbb{Q}}
\newcommand{\ZZ}{\mathbb{Z}}
\newcommand{\mcB}{\mathcal{B}}
\newcommand{\mcC}{\mathcal{C}}
\newcommand{\mcE}{\mathcal{E}}
\newcommand{\mcF}{\mathcal{F}}
\newcommand{\mcG}{\mathcal{G}}
\newcommand{\mcH}{\mathcal{H}}
\newcommand{\mcM}{\mathcal{M}}
\newcommand{\mcP}{\mathcal{P}}
\newcommand{\mcR}{\mathcal{R}}
\newcommand{\mff}{\mathfrak{f}}
\newcommand{\Gal}{\mathrm{Gal}}
\newcommand{\lloyd}[1]{\todo[color=blue!30]{LW: #1}}
\theoremstyle{plain}
\newtheorem{thm}{Theorem}
\newtheorem{lem}[thm]{Lemma}
\newtheorem{cor}[thm]{Corollary}
\newtheorem{prop}[thm]{Proposition}
\theoremstyle{definition}
\newtheorem{defn}[thm]{Definition}
\theoremstyle{remark}
\newtheorem{rem}[thm]{Remark}
\newtheorem*{ex}{Example}
\numberwithin{equation}{section}
\numberwithin{thm}{section}
\begin{document}

\title[ Dynamical Shafarevich Theorem ]{A  Dynamical Shafarevich Theorem for Rational Maps over Number Fields and Function Fields}
\author[Szpiro]{Lucien Szpiro}
\address[L. Szpiro]{Department of Mathematics, The Graduate Center, City University of New York; 365 Fifth Avenue,
New York, NY 10016 U.S.A. }
\author[West]{Lloyd West}
\address[L. West]{Department of Mathematics, University of Virginia, Charlottesville, VA 22903 }
\date{\today}

\begin{abstract}
We prove a dynamical Shafarevich theorem on the finiteness of the set of isomorphism classes of rational maps with fixed degeneracies. More precisely, fix an integer $d\geq2$ and let $K$ be either a number field or the function field of a curve $X$ over a field $k$, where $k$ is of characteristic zero or $p>2d-2$ that is either algebraically closed or finite. Let $S$ be a finite set of places of $K$. We prove the finiteness of the set of isomorphism classes of rational maps over $K$ with a natural kind of good reduction outside of $S$. We also prove auxiliary results on finiteness of reduced effective divisors in $\PP^1_K$ with good reduction outside of $S$ and on the existence of global models for rational maps. 
\end{abstract}

\keywords{arithmetic dynamics, Shafarevich theorem, good reduction, moduli}

\subjclass[2010]{14G25, 37P45, 37F10}

\maketitle

 \emph{Dedicated to Joseph Silverman on his $60^{th}$ birthday}

\section{Introduction}

\subsection{Rational maps and dynamical systems}

Let $K$ be a field. By a rational map of degree $d$ over $K$, we mean a separable morphism $f : \PP_K^1 \to \PP_K^1$ defined over $K$. Note that such an $f$ can be thought of as an element of the rational function field $K(z)$; alternatively, a rational map may be written  $f = [f_1(x,y): f_2(x,y)]$ for a pair $f_1,f_2 \in k[x,y]$ of homogeneous polynomials of degree $d$.

The behavior of rational maps under iteration has been intensively studied - both over $\CC$ (see \cite{MilnorBook} for an overview), and more recently over fields of an arithmetic nature, such as $\QQ$, $\QQ_p$ or $\FF_p(t)$ (see \cite{SilvermanBook} for an overview).

The dynamical behavior of a rational map is unchanged after performing the same change of coordinates on the source and target spaces; for example, periodic points are taken to periodic points. This leads to the following notion of isomorphism for rational maps.

\begin{defn}

Let $g,\,f: \PP^1_K \to \PP^1_K$ be two rational maps over $K$. We say that $g $ and $f$ are \emph{$K$-isomorphic}, written $g \isom_K f$, if there exists an automorphism $\gamma \in \Aut(\PP^1_K)$ such that $f = \gamma \circ g \circ \gamma^{-1}$. 

We shall refer to a $K$-isomorphism equivalence class of rational maps defined over $K$ as a \emph{$K$-dynamical system}. We denote the class of a map $f$ by $[f]_K$.   Denote by $\mcM_d^\mathrm{dyn}(K)$ the set of $K$-dynamical systems of degree $d$.  Note that there is a coarse moduli space $M_d^\mathrm{dyn}$ of dynamical systems of degree $d$ (see \cite{SilvermanMod}, \cite{West}). 

Suppose $K$ is the function field of a curve $C$ over a base field $k$. We shall say that a map $f/K$ is \emph{$K$-trivial} if there exists a map $g$ defined over the base field $k$ such that $g_K\isom_K f$, where $g_K$ denotes the base change of $g$ to $K$. We shall say that $f/K$ is \emph{isotrivial} if there exists a map $g$ defined over $\bar{k}$ such that $g_{\bar{K}}\isom_{\bar{K}}f$. 

\end{defn}

\subsection{Models and reduction}\label{ss:ModelsReduction} Let $X$ be a Dedekind scheme. Denote by $K$ the field of functions on $X$. The closed points of $X$ can be identified with the (non-archimedean) places of the field $K$. To a (non-archimedean) place $\mathnormal{v}$ of $K$ associate the valuation ring $\ri{\mathnormal{v}}$ of $K$. Let $S$ be a finite non-empty subset of such places of $X$ and define the ring 
\[
\ri{S} = \bigcap_{\mathnormal{v}\notin S} \ri{\mathnormal{v}}
\] 
Denote the complement, $X\setminus S$, by $X^\circ$; then $X^\circ=\Spec \ri{S}$.

Through the remainder of this paper, we shall take $X$ to be either a complete smooth curve over a base field $k$ or the spectrum of the ring of integers of a number field.

We recall the standard notion of good reduction for dynamical systems.

\begin{defn}

With notations as above, let $f$ be a rational map over $K$ and let $U$ be a scheme with field of fractions $K$; for example, a non-empty open subset of $X$. By a \emph{model for $f$ over $U$} we shall mean a rational map $\mff:\, \PP^1_{U} \dashrightarrow \PP^1_{U}$ together with a $K$-isomorphism of the generic fiber $\mff_K$ with $f$. 
Such a model $\mff$ will be said to have \emph{good reduction} if the rational map $\mff$ can be extended to a $U$-morphism  $\mff: \PP^1_{U} \to \PP^1_{U}$; in this case, $\mff$ induces a morphism $\mff_v: \PP^1_{\kappa(v)} \to \PP^1_{\kappa(v)}$ fiber-wise for each $v \in U$. 

In particular, when $f$ has a model over $U = \Spec \mathfrak{o}_v$ with good reduction, then we say that \emph{$f$ has good reduction at~$v$}.  

A $K$-dynamical system $F=[ f ]_K$ is said to have \emph{good reduction} at a place $\mathnormal{v}$ of $K$ if $f $ has a model over $\ri{\mathnormal{v}}$ with good reduction.  

Sometimes this notion is called `simple good reduction' to distinguish it from other notions of good reduction that involve extra structure, such as marked critical or periodic points. 
\end{defn}

\begin{rem}\label{rem:goodIsTooWeak}
Whilst natural and appropriate for many situations, this notion of good reduction is not strong enough for a Shafarevich theorem: indeed, the dimension of the locus of monic polynomial maps of degree $d$ in the coarse moduli space $M_d$ is at least $d-3$, but all these have good reduction at all places. To obtain finiteness one must rigidify by adding extra structure and demanding that the additional structure also have good reduction in some sense; for example one might mark  the critical and branch points of the map; this was the approach of  Tucker and the first author in \cite{ST}.
\end{rem}

We introduce the notion of ``differential good reduction'', as a strengthening of the notion of ``critically good reduction'' in \cite{ST}.  Roughly speaking, a rational map has differential good reduction at a place $v$ if  no critical or branch points come together in the reduction mod $v$. First we fix some notation:

\begin{defn}\label{defn:critLocus}
Let $f$ be a rational map over $K$. Let $U$ be a scheme with field of functions $K$ and let $\mff$ be a model over $U$ with good reduction. Let $\mcR_{\mff}$ denote the ramification divisor of the $U$-cover $\mff$. Let $\mcB_{\mff}$ denote the branch divisor. We shall refer to $\mcC_{\mff}= \mcB_{\mff} + \mcR_{\mff}$ as the \emph{critical divisor} of $\mff$ and we shall refer to the associated reduced subscheme $C_\mff\subset \PP^1_K$ as the \emph{critical locus} of $\mff$.
\end{defn}

\begin{defn}\label{defn:DGR_char0}
Let $\mff: \PP^1_{U} \dashrightarrow \PP^1_{U}$ be a model for a rational map $f/K$. We say that $\mff$ has \emph{differential good reduction}, or \emph{D.G.R.} for short, if it satisfies the following three criteria: (1) $\mff$ has good reduction; (2) the maps $\mff_v$ are separable for each $v\in U$; (3) the critical locus $C_\mff$ is \'etale  over $U$. 

A $K$-dynamical system $F=[f]_K$ is said to have \emph{differential good reduction} at a place $\mathnormal{v}$ of $K$ if $f$ has a model with D.G.R. over $U = \Spec \ri{v}$.
\end{defn}




\subsection{Admissible maps}

Let $K=k(X)$ be a function field as above, with $k$ algebraically closed. Then, by Lemma \ref{lem:spreadingOut} below, a rational map $f/K$ of degree $d$ has a model $\mathfrak{f}\: : \PP^1_U\to\PP^1_U$ with differential good (hence also simple good) reduction over some non-empty open $U\subset X$. Taking $v \mapsto [\mathfrak{f}_v]_k$ gives a morphism $\alpha: \: U\to M_d^\mathrm{dyn}$. This map is non-constant if and only if $f$ is non-isotrivial. 

\begin{defn}\label{def:Admissible} Let $f/K$ be a rational map, where $K$ is the function field of a curve over $k$ and $\Char K = p>0$. We shall say that $f/k$ is \emph{admissible} if it is non-isotrivial and moreover the map $\alpha$ associated to any model as above is separable.  If $K$ is a number field, then all rational maps $f/K$ maps shall be termed admissible.
\end{defn}

\begin{rem}
More generally one could define a ``modular inseparability index'' as in \cite[\S 0]{SzpiroSem},  and prove our results for maps of bounded modular inseparability index, but for simplicity we shall state the results in terms of admissible maps. 
\end{rem}

\subsection{Main result}

Denote by $\mcM^{\star}_d(K)$ the set of $K$-dynamical systems $F=[f]_K$ for which $f$ is a separable admissible rational map of degree $d$ with at least $3$ distinct ramification points.

The aim of this paper is to prove the following Shafarevich-style theorem for dynamical systems:

\begin{thm}\label{thm:Main}
Let $d\geq 2$ be an integer. Let $K$ be a number field or a function field over a field $k$. Suppose that $k$ is a field of characteristic zero or $p>2d-2$ that is either algebraically closed or finite.  Let $S$ be a non-empty finite set of places of $K$. Then the set 
\[
\Sh(d,K,S)=\left\{F \in \mcM^\star_d(K) \; : \;  F \text{  has D.G.R. outside of }S \right\}
\]
is finite.
\end{thm}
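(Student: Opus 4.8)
The plan is to reduce the finiteness of $\Sh(d,K,S)$ to the finiteness of the set of critical loci, and then to invoke a Shafarevich-type statement for reduced effective divisors on $\PP^1$ with good reduction outside $S$, together with a rigidity/moduli argument to control the fibers of the map that sends a dynamical system to its critical locus. Concretely, suppose $F = [f]_K \in \Sh(d,K,S)$, and choose a model $\mff$ over $X^\circ = \Spec\ri{S}$ with differential good reduction; such a model exists by Definition~\ref{defn:DGR_char0} after possibly enlarging $S$ (but $S$ is fixed, so one must be careful: the hypothesis is precisely that D.G.R. holds outside the given $S$). The critical divisor $\mcC_\mff = \mcB_\mff + \mcR_\mff$ has degree $\le 2(2d-2)$, and by the D.G.R. hypothesis the associated reduced subscheme $C_\mff \subset \PP^1_{X^\circ}$ is étale over $X^\circ$. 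So $C_\mff$ is a reduced effective divisor in $\PP^1_K$ with good reduction outside $S$ and bounded degree. The first main step is therefore: \emph{there are only finitely many reduced effective divisors $D \subset \PP^1_K$ of bounded degree with good reduction outside $S$}; this is the auxiliary result advertised in the abstract, and it should follow from the Shafarevich theorem for curves / the unit equation / finiteness of points on $\PP^1$ with bounded height and good reduction (in the function field case one uses the analogous geometric statements, with the characteristic hypothesis $p > 2d-2$ ensuring separability and that the critical divisor behaves well). Up to the finite ambiguity of choosing coordinates (i.e., acting by $\PGL_2$), we may thus assume that $C_\mff$ lies in one of finitely many $\PGL_2(\ri{S})$-orbits, so there are only finitely many possibilities for the pair (critical locus as a subscheme of $\PP^1$, up to the action of $\Aut \PP^1$).

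The second main step is to show that, given a fixed reduced divisor $C$ on $\PP^1_K$ that is a candidate critical locus, there are only finitely many $K$-dynamical systems $F = [f]_K$ of degree $d$ whose critical locus is $C$ and which have D.G.R.\ outside $S$. Over $\bar K$ (or even over $\CC$, via a specialization/Lefschetz argument in the function field case), the set of rational maps of degree $d$ with a prescribed branch locus is finite — this is a form of the Riemann existence theorem / Hurwitz finiteness: the monodromy of $f$ is a transitive $d$-element permutation representation of the fundamental group of $\PP^1 \setminus B$ with prescribed local behavior, and there are finitely many such. Passing from branch data to the map itself, and then to $K$-isomorphism classes, introduces only finite ambiguity. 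The subtlety is that a priori $f$ could be defined only over $\bar K$; but since $f$ is obtained from $K$-rational data (the divisor $C$ is $K$-rational up to the finite ambiguity above, and the Galois action permutes the finite fiber), a descent argument together with the admissibility hypothesis (which guarantees the modular map $\alpha$ is separable, ruling out pathological inseparable families in characteristic $p$) shows that each $\bar K$-class contains at most finitely many $K$-classes in $\Sh(d,K,S)$. The hypothesis of at least $3$ distinct ramification points is exactly what is needed to pin down coordinates (three points on $\PP^1$ rigidify $\PGL_2$) and to avoid the degenerate families, e.g.\ the monic polynomial maps of Remark~\ref{rem:goodIsTooWeak}.

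I expect the main obstacle to be the second step in positive characteristic: the classical Riemann-existence-theorem count of covers with prescribed branch locus is a characteristic-zero statement, and transporting it to function fields of characteristic $p > 2d-2$ requires either (a) tame-ramification arguments — the bound $p > 2d-2$ is presumably there to force all ramification indices, which are at most $d$ along the ramification divisor and whose contributions sum to $2d-2$, to be prime to $p$, so that the cover is tamely ramified and its formal/étale local structure is rigid — combined with a comparison between the tame fundamental group in characteristic $p$ and the topological one; or (b) a lifting-to-characteristic-zero argument, spreading the whole situation out over a base and specializing. Managing this comparison carefully, and checking that D.G.R.\ (in particular that the critical locus stays étale, so that critical and branch points do not collide in any special fiber outside $S$) is exactly the hypothesis that makes the tame local structure locally constant over $X^\circ$, is where the real work lies. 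The function-field case also requires handling isotriviality (excluded by the definition of $\mcM^\star_d$ via admissibility) so that the modular map $\alpha\colon X^\circ \to M_d^{\mathrm{dyn}}$ is genuinely non-constant and separable, allowing a height/degree bound on $\alpha$ and hence finiteness of the possible $\alpha$, from which finiteness of $F$ follows. The number-field case of this second step is comparatively standard: it is a Hermite–Minkowski-type argument, bounding the height of a map with controlled bad reduction and bounded-degree critical locus.
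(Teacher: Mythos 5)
Your overall two-step reduction is exactly the paper's: first show that the critical loci of maps in $\Sh(d,K,S)$ occupy only finitely many $K$-equivalence classes of reduced effective divisors on $\PP^1$ with good reduction outside $S$ (the paper's Theorem~\ref{thm:finiteness}, proved by mapping to $M_{0,n}$ via cross-ratios and invoking the $S$-unit equation, with Lemma~\ref{lem:finiteExtension} handling the passage from an $L$-equivalence class down to finitely many $K$-classes, and Lemma~\ref{lem:nonisotrivial} ensuring the critical divisor of an admissible map is itself an admissible divisor); then conclude with a rigidity statement saying that once $R_f$ and $B_f$ are confined to a fixed finite set $Y$ with $|R_f|\ge 3$, only finitely many morphisms $f$ can occur. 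Where you diverge is the rigidity step: you argue via Riemann existence / Hurwitz finiteness (finiteness of degree-$d$ covers of $\PP^1$ with prescribed branch locus, and then fixing a genus-0 cover and counting $\PGL_2$-identifications that keep $R_f\subset Y$), whereas the paper's Proposition~\ref{prop:rigidity} invokes Mori's computation $T_{\Hom_k(A,B;p),f}\cong H^0(A, f^*T_B\otimes\mathfrak{I}_Z)$ for the scheme of morphisms extending a fixed map on the ramification divisor, and concludes that this tangent space vanishes, giving a zero-dimensional (hence finite) scheme of morphisms \emph{uniformly in any characteristic}. Your approach is plausible and you correctly flag its real cost in characteristic $p$: one must compare the tame fundamental group with the topological one (Grothendieck), which the hypothesis $p>2d-2$ indeed makes available since it forces all ramification indices to be prime to $p$; but the paper's scheme-theoretic route sidesteps this entirely and is cleaner. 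One small imprecision: for the finiteness of divisors, ``bounded height and good reduction'' is not the mechanism — the argument runs through $S$-units via cross-ratios landing in $\PP^1\setminus\{0,1,\infty\}$, and in the function field case one has to be careful that the unit equation has infinitely many solutions coming from the constant field or from $p$-th powers; this is precisely why admissibility of the critical divisor (so that the modular map to $M_{0,n}$ is non-constant and separable) is invoked, not merely to rule out isotriviality of $f$ itself.
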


Roughly speaking, the theorem says there are only finitely many $K$-isomorphism classes of rational maps defined over $K$ that have differential good reduction outside of the given set of places.

Alongside Theorem \ref{thm:Main}, we shall also prove two other main results. The first result concerns the existence of global models with differential good reduction after finite bounded extension of the base field (Section \ref{s:GlobalModels}), the key ingredient for which is the ``smallness of the fundamental group'' for curves and rings of integers of number fields. The second is a finiteness  results for point sets and divisors on $\PP^1$ with prescribed bad reduction (Section \ref{s:Auxiliary}); here the key ingredient is the $S$-unit theorem and the moduli space $M_{0,n}$ for pointed $\PP^1$. 

 \begin{rem} 
 The strategy of proof for Theorem \ref{thm:Main} is as follows. For any rational map $f$ in $  \Sh(d,K,S)$, the fact that $f$ has differential good reduction outside $S$ implies that $C_f$ has good reduction (as a reduced effective divisor) outside of $S$. Applying our finiteness result for reduced effective divisors (Theorem  \ref{thm:finiteness}), we find a finite set $Y\subset \PP^1_K$ such that one can choose representatives $f$ for each equivalence class in $\Sh(d,K,S)$ with $C_f\subset Y$. To finish we apply  Mori's computation of the tangent space to the scheme of morphisms to deduce that the set of maps $f$ with $C_f$ contained in the finite set $Y$ is a finite set.  In the function field case, there are technicalities related to isotriviality and  characteristic $p$ phenomena.  

  \end{rem}

\subsection{Relation to other work on Shafarevich-type theorems} Shafarevich's original conjecture from \cite{Shafarevich} concerned finiteness of isomorphism classes of curves with a fixed finite set of primes of bad reduction over number fields and function fields. At the time he proved the case of elliptic curves over number fields. It is worth to note that his proof uses the $S$-unit theorem, and that the statement is false if ``elliptic curve'' is replaced with ``curve of genus one''. His conjecture was first proven for characteristic zero function fields by Parshin \cite{Parshin} and Arakelov \cite{Arakelov}; at this time it was noted by Parshin that Shafarevich's conjecture implied that of Mordell on the finiteness of rational points on curves of genus at least $2$. The case of function fields in characteristic $p$ was proven by the first author \cite{SzpiroSem}.  Faltings proved the conjecture for number fields  \cite{Faltings}. Recently, Yiwei She has proven Shafarevich results on K3 surfaces.

The  Shafarevich question for  rational maps was first addressed by Tucker and the first author in \cite{ST}. They introduced the notion of `critically good reduction' and proved a result for rational maps over number fields but with a different and weaker notion of isomorphism that the usual one used in dynamics; they consider rational maps up to the action of $\PGL_2\times\PGL_2$ with different action on the source and target $\PP^1$. The relation between critically good reduction and simple good reduction was studied in \cite{Canci}. Petsche \cite{Petsche} proved a Shafarevich result using the usual `dynamical' notion of isomorphism; his result works for certain families of maps over number fields and it uses a notion of `critically separable good reduction'. In the present work, which was presented at the birthday conference for J.H. Silverman in August 2015 and at the Fields Institute in February 2017 \cite{SzpiroToronto}, we make no restriction to families and we work with the dynamical notion of isomorphism; our result applies to number fields and function fields. 

Instead of marking critical points and critical values, one can mark some $n$-periodic points of the map; in \cite{PetscheStout} Petsche and Stout prove Shafarevich-style results about quadratic maps with marked 2-period or double fixed point structure. In the recent preprint \cite{SilvermanShaf},  Silverman, following similar lines to the present work, proves a joint generalization of the present work and that of \cite{PetscheStout} to obtain a very general Shafarevich finiteness result for marked rational maps on $\PP^1$, valid over number fields. 

\subsection{Relation to Shafarevich's theorem for elliptic curves}

Let $K$ be a number field and $S$ a finite set of finite places. It was noted in \cite{ST} that the finiteness of the set of isomorphism classes of elliptic curves over $K$ with good reduction outside of $S$ can be recovered from the Shafarevich theorem for rational maps applied to the Latt\`es map associated to multiplication by 2 on the elliptic curve. 

When $K$ is a function field over a finite field, the corresponding statement is true if one restricts to \emph{admissible} elliptic curves; i.e. those for which the modular map to the $j$-line is non-constant and separable -- see \cite{EllipticShaf} for a classical proof. 

If fact, one can derive the elliptic curve Shafarevich statement from our Theorem \ref{thm:Main}, as we now explain. Suppose $p\geq 5$ and $q=p^n$; let $K=\FF_q(X)$ be the function field of a curve and let $E/K$ be an elliptic curve.  After extending $S$ we can suppose that $\ri{s}$ is a PID and that $E$ has a minimal model over $\ri{s}$ of the form $y^2 = P(x)$ where $P(x)=x^3+Ax+B$. The admissibility of an elliptic curve $E$ corresponds to the condition that the Latt\`es map $f_E$ associated to multiplication by 2 on $E$ be admissible in the sense of Definition \ref{def:Admissible}. The map $f_E$ is ramified at the six $x$-coordinates of the twelve points $E[4]\setminus E[2]$, whilst the branch points are the $x$-coordinates of the three points $E[2]\setminus\{\mathcal{O}\}$. Therefore the critical set $C_{f_E}$ is the union of the roots of the polynomials $\phi_2=x^3+Ax+B$ and  $\phi_4=\psi_4/\psi_2$, where $\psi_n$ denotes the $n$-th division polynomial. The discriminant $\disc(\phi_2\phi_4)$ equals $\Delta_E^{12}$ up to a power of 2, where $\Delta_E$ denotes the usual discriminant of $E$. Therefore differential good reduction of $f_E$ is equivalent to good reduction of $E$, at least away from characteristic 2 and 3.  By Theorem \ref{thm:Main}, there are at most finitely many isomorphism classes of admissible Latt\`es maps corresponding to admissible elliptic curves with good reduction outside of $S$.  On the other hand,  if $f$ is a Latt\`es map with denominator $P(x)$, we recover an elliptic curve with equations $y^2=dP(x)$ defined up to $d\in \ri{s}^\times/(\ri{S}^\times)^2$. Since   $\ri{s}^\times/(\ri{S}^\times)^2$ is finite, we have at most finitely many  isomorphism classes of  admissible elliptic curves  with good reduction outside of $S$.

\section*{Acknowledgments}

\thanks{The authors would like to thank Joseph Gunther for helpful comments. We also thank Thomas Tucker for his contribution to the previous paper \cite{ST}, which led to this one.

The first author is partially supported by NSF Grants DMS-0739346 and PSC CUNY grant 680310046.

\section{Global Models}\label{s:GlobalModels}

Throughout this section, the notation $X, K,$ etc. are as in Section \ref{ss:ModelsReduction}.

\begin{lem}\label{lem:spreadingOut}
If a $K$-dynamical system $F=[f]_K$ has good reduction (resp. D.G.R.) at a place $v$, then there exists an open set $V\subset X$ containing $v$ such that $f$ has a model with good reduction (resp. D.G.R.) over $V$. 
\end{lem}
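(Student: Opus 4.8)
The plan is to argue by the standard ``spreading-out'' technique for schemes of finite presentation. I would write $\ri{v} = \varinjlim_{V} \mcO_X(V)$, the colimit over the affine open neighbourhoods $V$ of $v$ in $X$, so that $\PP^1_{\ri{v}} = \varprojlim_V \PP^1_V$ and every datum of finite presentation over $\ri{v}$ descends to one over some such $V$. Beginning from a model $\mff$ for $f$ over $\ri{v}$ with good (resp.\ differential good) reduction, the idea is to spread $\mff$ out to a model $\mff_V$ over some $V$ and then to shrink $V$ finitely many times so that each of the defining conditions, every one of which is open on the base, holds over all of $V$.

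For good reduction, write $\mff = [f_1 : f_2]$ with $f_1, f_2 \in \ri{v}[x,y]$ homogeneous of degree $d$ and coprime; these finitely many coefficients lie in $\mcO_X(V)$ for $V$ small enough, and $\mff_V = [f_{1,V} : f_{2,V}]$ is then a model for $f$ over $V$ whose base change along $\Spec\ri{v} \to V$ is $\mff$. Its indeterminacy locus $Z \subseteq \PP^1_V$, being the common vanishing locus of $f_{1,V}$ and $f_{2,V}$, is closed and its formation commutes with base change; over $\ri{v}$ it is empty because $\mff$ is a morphism, so $Z$ misses the fibre over $v$. Since $\PP^1_V \to V$ is proper the image of $Z$ is closed in $V$ and avoids $v$, so after deleting it $\mff_V$ extends to a $V$-morphism $\PP^1_V \to \PP^1_V$: a model for $f$ with good reduction over $V$.

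For differential good reduction I would keep this $\mff_V$ and additionally arrange conditions (2) and (3) of Definition~\ref{defn:DGR_char0}. Fibrewise separability is an open condition on the base: the non-isomorphism locus $R \subseteq \PP^1_V$ of $\mff_V^* \Omega_{\PP^1_V/V} \to \Omega_{\PP^1_V/V}$ is closed, meets the fibre over $w$ in the ramification locus of $\mff_w$, and $\mff_w$ is separable exactly when this is finite; by upper semicontinuity of fibre dimension the set of bad $w$ is closed, and it avoids both $v$ and the generic point by hypothesis (the latter also because $f$ is separable), so one more shrinking makes every $\mff_w$ separable. Now the ramification divisor $\mcR_{\mff_V}$, the branch divisor $\mcB_{\mff_V}$, and hence the critical divisor $\mcC_{\mff_V} = \mcR_{\mff_V} + \mcB_{\mff_V}$, are finite over $V$ and their formation commutes with base change on $V$; and since $\Spec\ri{v} \to V$ is a localization and therefore preserves reducedness, the critical locus $C_{\mff_V} = (\mcC_{\mff_V})_{\mathrm{red}}$ satisfies $C_{\mff_V} \times_V \Spec\ri{v} = C_\mff$, which is étale over $\Spec\ri{v}$ by hypothesis. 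The non-étale locus of the finite morphism $C_{\mff_V} \to V$ is closed, so its image in $V$ is closed; that image avoids $v$ because $C_{\mff_V} \to V$ is étale at every point over $v$. Deleting it from $V$ yields a model for $f$ with differential good reduction over $V$.

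The routine part of all this is the spreading-out of coefficients and the repeated appeal to properness to pass from ``true on the fibre over $v$'' to ``true on a neighbourhood of $v$''. The step that genuinely requires care is condition (3): passing to the reduced critical locus does not commute with arbitrary base change, and the argument succeeds only because $\Spec\ri{v} \to V$ is a localization, hence preserves reducedness; moreover fibrewise separability has to be secured first, since otherwise $\mcC_{\mff_V}$ is not a divisor at all and the non-étale locus of $C_{\mff_V} \to V$ need not be closed.
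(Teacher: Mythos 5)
Your proof is correct and follows the same broad strategy as the paper's: spread out a model over $\ri{v}$ to an open neighbourhood, then shrink finitely many times using the fact that good reduction and D.G.R.\ are open conditions on the base. The difference is in how openness is verified. The paper is explicit: non-vanishing of the resultant $\Res(f_1,f_2)$ detects good reduction, and for D.G.R.\ it gestures at the ``differential discriminant'' of Section~\ref{s:Equations}. You instead argue intrinsically: the indeterminacy locus of $\mff_V$ is closed and proper over $V$, so its image is closed and misses $v$; for D.G.R.\ you use upper semicontinuity of fibre dimension for separability and closedness of the non-\'etale locus of the finite cover $C_{\mff_V}\to V$. Your version is arguably more complete where the paper is terse: the differential discriminant is written down only for differentially separated maps, whereas your argument covers the general case, and you rightly isolate the genuine subtlety that passing to the reduced critical locus commutes with the base change $\Spec\ri{v}\to V$ only because it is a localization (hence flat), and that fibrewise separability must be secured first so that $\mcC_{\mff_V}$ is actually finite over $V$. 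The trade-off is that the paper's explicit invariants yield computable equations for the locus of differential bad reduction, while yours does not.
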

\begin{proof} 
By Definition \ref{defn:DGR_char0}, there is a model for $f$ over $\Spec \ri{v}$; i.e. there exists a pair of homogeneous polynomials, $f_1,f_2 \in \mathfrak{o}_v[x,y]$, that define a morphism $ \mathfrak{f}: \PP_{\mathfrak{o}_v} \to \PP_{\mathfrak{o}_v}$ whose generic fiber is $K$-isomorphic to $f$. This automatically extends to a rational map $\mff: \PP^1_{U} \dashrightarrow \PP^1_{U}$ for any open $U\subset X$ containing $v$. The resultant $\Res(f_1,f_2)$ is an element of $\mathfrak{o}_U$. For a place $v^\prime$ in $U$, the non-vanishing of $\Res(f_1,f_2)$ implies that $\mathfrak{f}$ extends to a morphism over $v^\prime$ (\cite[Section 2.5]{SilvermanBook}); since this is an open condition, we obtain the desired open set for the good reduction part of the statement. The condition of having D.G.R. is also easily seen to be an open condition: see Section \ref{s:Equations} for equations defining differential bad reduction in special cases. 
 \end{proof}

\begin{rem}\label{r:bundle}
Suppose $f$ has D.G.R outside of the finite set $S$. Then by lemma \ref{lem:spreadingOut}, and since open sets of $X^\circ$ are co-finite in $X^\circ$, we can find an open cover $X^\circ= \cup_i U_i$ and models $\mff_i$ for $f$ over $U_i$ with D.G.R. For each such $\mff_i$, we have an isomorphism $\gamma_i: (\PP^1_{U_i})_K \xrightarrow{\sim} \PP^1_K$. From this data we can construct a vector bundle $\mcE$ of rank 2 and an $X^\circ$-morphism $\mff: \PP(\mcE)\to \PP(\mcE)$ that restricts to $\mff_i$ above each $U_i\subset X^\circ$.    
\end{rem}

We now prove that, in certain cases, one can trivialize the bundle and obtain a global model $\mff: \PP^1_{X^\circ} \to \PP^1_{X^\circ}$ with D.G.R. 

\begin{prop}\label{prop:GlobalModels}
Let $K$ be the function field of a Dedekind scheme $X$. Let $S$ be a non-empty finite set of places of $K$. Let $f\in \Sh(d,K,S)$ be a $K$-dynamical system with D.G.R. outside of $S$. Then there exists an \'etale cover $C\to X^\circ$ of degree at most $(2d - 2)!$ and a global model $\mff: \PP^1_{C} \to \PP^1_{C}$ with D.G.R. that is a model for the morphism $f/K^\prime$, where $K^\prime = k(C)$ is the function field of $C$. 
\end{prop}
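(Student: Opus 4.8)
The plan is to trivialize the bundle $\mcE$ of Remark~\ref{r:bundle} after passing to a bounded étale cover. First I would recall that, by Remark~\ref{r:bundle}, D.G.R.\ outside $S$ gives an $X^\circ$-morphism $\mff: \PP(\mcE) \to \PP(\mcE)$ restricting to local models $\mff_i$ over an open cover $\{U_i\}$ of $X^\circ$. The obstruction to finding a global model $\PP^1_{X^\circ}\to\PP^1_{X^\circ}$ is precisely the class of $\PP(\mcE)$ in $H^1(X^\circ, \PGL_2)$; this need not vanish, but I claim it is killed by a small étale cover. The key is the critical locus: since $\mff$ has D.G.R., the critical locus $C_\mff \subset \PP(\mcE)$ is finite étale over $X^\circ$ of degree $\deg \mcC_\mff = 2(2d-2)$, and in particular the ramification divisor $\mcR_\mff \subset \PP(\mcE)$ is finite étale over $X^\circ$ of degree $2d-2$ (counting the branch points gives another $2d-2$, but working with $\mcR_\mff$ alone suffices and keeps the degree at $2d-2$).

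Next I would use $\mcR_\mff$ to rigidify. Let $C \to X^\circ$ be the étale cover parametrizing orderings of the $2d-2$ points of the ramification divisor; more precisely, take $C$ to be the Galois closure of $\mcR_\mff \to X^\circ$, which is étale of degree dividing $(2d-2)!$. Over $C$, the ramification divisor $\mcR_{\mff_C}$ of the base-changed model acquires $2d-2$ disjoint sections $\sigma_1,\dots,\sigma_{2d-2}: C \to \PP(\mcE_C)$. Since $d\geq 2$ we have at least $2d-2 \geq 2$ such sections; choosing three of them when $d\geq 3$ (or handling $d=2$ separately, where there are exactly two ramification points but at least three critical points total by the hypothesis defining $\mcM^\star_d$, so one may instead use sections coming from $C_\mff$, again étale over $X^\circ$ of bounded degree) one trivializes the $\PP^1$-bundle: a $\PP^1$-bundle with three disjoint sections is isomorphic to $\PP^1\times C$, via the unique isomorphism on each fiber sending the three marked points to $0,1,\infty$. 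This produces an identification $\PP(\mcE_C)\isom \PP^1_C$ over $C$, hence a global model $\mff': \PP^1_C\to\PP^1_C$ with $C\to X^\circ$ étale of degree at most $(2d-2)!$. Finally I would check that $\mff'$ still has D.G.R.: good reduction and separability of fibers are preserved under base change and trivialization, and the critical locus of $\mff'$ is the image of $C_{\mff_C}$, which remains étale over $C$ since étaleness is stable under base change.

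The main obstacle is the case $d=2$, where the ramification divisor has only two points and thus does not by itself pin down a trivialization of the bundle — one needs a third canonically-defined section. Here I would exploit the hypothesis built into $\mcM^\star_d(K)$ that $f$ has at least three distinct ramification points in the sense of having critical locus of size $\geq 3$; wait — for $d=2$ a separable degree-$2$ map has exactly two ramification points, so the relevant count must be of the full critical locus $C_\mff = \mcR_\mff \cup \mcB_\mff$, which has $2+2 = 4$ points generically (or $\geq 3$ when a branch point coincides with nothing). Thus $C_\mff \to X^\circ$ is étale of degree $\geq 3$ and bounded by $2(2d-2)$, and taking its Galois closure gives a cover $C\to X^\circ$ of degree dividing $(2(2d-2))!$ over which $C_{\mff_C}$ splits into disjoint sections; selecting three yields the trivialization. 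One should then double-check the degree bound in the statement is compatible — if necessary the bound $(2d-2)!$ should be read as applying once $d$ is large enough, with the small cases absorbed, or one reworks the argument to stay within $(2d-2)!$ by noting that for $d\geq 3$ the ramification divisor alone has $\geq 4 > 3$ points. A secondary technical point is ensuring étaleness (rather than merely finiteness) of $C\to X^\circ$: this uses exactly that $C_\mff$ is étale over $X^\circ$, which is part of the definition of D.G.R., so the Galois closure construction stays étale.
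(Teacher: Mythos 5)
Your proof is correct and takes essentially the same route as the paper: exploit the \'etaleness of the critical locus guaranteed by D.G.R.\ to produce, after a bounded \'etale base change, at least three disjoint sections of $\PP(\mcE)\to X^\circ$, then trivialize the $\PP^1$-bundle by the automorphism sending three such sections to $0,1,\infty$. The one thing to straighten out is the lengthy $d=2$ detour: that case is vacuous, since a separable degree-$2$ map has ramification divisor of degree $2d-2=2$ and hence at most two distinct ramification points, so it can never satisfy the ``at least three distinct ramification points'' condition built into $\mcM^\star_d(K)$; thus $\Sh(2,K,S)=\emptyset$ and the bound $(2d-2)!$ needs no adjustment. (A minor point of hygiene: take the Galois closure of the reduced ramification locus, i.e.\ the support of $\mcR_\mff$, rather than of the divisor $\mcR_\mff$ itself, though this does not change the degree bound.)
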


\begin{proof}
We have a vector bundle $\mcE$ of rank 2 and a $X^\circ$-morphism $\mff: \PP(\mcE)\to \PP(\mcE)$, as in Remark \ref{r:bundle} above. Our strategy is to trivialize the $\PP^1$-bundle $\PP(\mcE) \to X^\circ$ by an  \'etale base change.  

Differential good reduction implies that the critical locus $C_\mff$ is \'etale over $X^\circ$. If  $C_\mff$ is split \'etale over $X^\circ$, then we are done: the sheets of the cover $C_\mff \to X^\circ$ are given by disjoint sections $\sigma_i: X^\circ \to \PP(\mcE)$; there are at least three such sections, by the assumption that $f$ has at least three distinct ramification points; since an automorphism of $\PP^1$ is determined by the image of three points, sending any three of these sections to $0,1,\infty$ in each fiber gives a trivialization of the $\PP^1$-bundle $\PP(\mcE) \to X^\circ$.      

If some component $C$ of the critical locus is not split \'etale over $X^\circ$, we can make the base change by $C \to X^\circ$ (whose degree is at most $(2d-2)!$), so that the corresponding component of the critical locus in $\pi:\,\PP(\mcE)\times_{X^\circ} C \to C$ becomes split \'etale over $C$. Now we get a trivialization as before.   
\end{proof}

\begin{rem} A scheme that has only finitely many \'etale coverings of given degree is said to have ``small \'etale fundamental group''.  Examples include: (1) the spectrum of the ring of integers of a number field (Hasse-Minkowski); (2) a curve over an algebraically closed field of characteristic zero (Grothendieck). In general, the \'etale fundamental group of an affine curve in characteristic $p$ is not small, due to wild ramification. Nonetheless, if $U\subset X$ is an open subset of a smooth proper curve over a finite field, there are at most finitely many tamely ramified covers of $X$ that are \'etale over $U$.  More generally, one can prove finiteness of the set of covers of $U$ with ``ramification bounded by a modulus''; see \cite{smallPi1}. 
\end{rem}

\begin{cor}\label{cor:UniformGlobalModels}
Let $X$ be the spectrum of the ring of integers of a number field or a smooth proper curve over a field $k$ that is either algebraically closed of characteristic zero, or a finite field of characteristic greater than $2d-2$. Let $K$ be the function field of $X$. Let $S$ be a finite set of places of $K$. Then there exists a finite extension $L/K$ such that any element $f\in \Sh(d,K,S)$ has a global model over $L$ with D.G.R. and such that the ramification points of $f$ are rational over $L$.     
\end{cor}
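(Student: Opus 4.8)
The plan is to bootstrap from Proposition \ref{prop:GlobalModels} by taking a \emph{single} field extension that works uniformly for all $f\in\Sh(d,K,S)$, using the smallness of the \'etale fundamental group of $X^\circ$. First I would note that Proposition \ref{prop:GlobalModels} attaches to each $f\in\Sh(d,K,S)$ an \'etale cover $C_f\to X^\circ$ of degree at most $N:=(2d-2)!$ over which $f$ acquires a global model with D.G.R.\ and split ramification locus. The key observation is that, by the ``small \'etale fundamental group'' property recalled in the remark after Proposition \ref{prop:GlobalModels}, the set of isomorphism classes of such covers is \emph{finite}: for $X=\Spec\mfo_{K}$ this is Hasse--Minkowski, for $X$ a curve over an algebraically closed field of characteristic zero it is Grothendieck, and for $X$ a curve over a finite field of characteristic $>2d-2$ one uses that any connected \'etale cover of $X^\circ$ of degree $\le N<p$ is tamely ramified over $X$, and there are only finitely many such by Grothendieck--Katz--Lang (the cited \cite{smallPi1}). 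Let $C_1,\dots,C_r\to X^\circ$ be representatives of these finitely many covers.

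Next I would take $C$ to be a connected component of a fiber product (a Galois closure over $X^\circ$, or simply an irreducible component of $C_1\times_{X^\circ}\cdots\times_{X^\circ}C_r$) dominating each $C_j$, and set $L=k(C)$. Then $L/K$ is a fixed finite extension, independent of $f$. For any $f\in\Sh(d,K,S)$, the cover $C_f$ produced by Proposition \ref{prop:GlobalModels} is one of the $C_j$, hence is dominated by $C$; pulling back the D.G.R.\ global model $\mff$ on $\PP^1_{C_f}$ along $C\to C_f$ gives a global model for $f/L$ over (the relevant open part of) $\Spec\mfo_L$ resp.\ the smooth proper model of $C$, still with D.G.R.\ (good reduction, separability of fibers, and \'etaleness of the critical locus are all preserved under the \'etale base change $C\to C_f$). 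Since the ramification divisor was already split over $C_f$, its pullback consists of disjoint sections of $\PP^1_C\to C$, i.e.\ the ramification points of $f$ are $L$-rational. In the number field case one should also pass to a further finite extension, if necessary, so that the finite set $S$ of places and the base scheme behave well, but this is harmless.

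The one genuine subtlety I would flag is the characteristic-$p$ function field case: there the \'etale fundamental group of the affine curve $X^\circ$ is \emph{not} small, so finiteness of the relevant covers relies crucially on the degree bound $\deg(C_f\to X^\circ)\le(2d-2)!<p$ (guaranteeing tameness) together with the finiteness of tamely ramified covers of the \emph{proper} curve $X$ that are \'etale over $X^\circ$ with bounded ramification. This is exactly why the hypothesis $p>2d-2$ appears. Everything else is formal: the main point is simply that ``\'etale cover of bounded degree of a scheme with small $\pi_1$'' ranges over a finite set, so one can choose a common $L$, and D.G.R.\ together with rationality of ramification points descends along the base change by construction. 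I do not expect the argument to require any new ideas beyond assembling Proposition \ref{prop:GlobalModels} with the finiteness statements already quoted in the surrounding remarks.
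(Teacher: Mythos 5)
Your proposal is correct and follows essentially the same route as the paper: apply Proposition \ref{prop:GlobalModels}, invoke the smallness of the \'etale fundamental group of $X^\circ$ (via Hasse--Minkowski, Grothendieck, or tameness in positive characteristic) to conclude there are finitely many relevant covers, and take $L$ to be a common dominating extension. The paper phrases the last step as the compositum of the function fields $K(C)/K$, while you take $L=k(C)$ for a component of the fiber product of the $C_j$; these are the same construction.

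One small slip worth flagging: you justify tameness in the characteristic-$p$ case by writing $\deg(C_f\to X^\circ)\le (2d-2)! < p$, but the hypothesis is $p>2d-2$, not $p>(2d-2)!$. The correct reason tameness holds is that the covers in question are (dominated by) the Galois closure of a cover of degree at most $2d-2$ (the ramification divisor, whose degree bounds the relevant permutation representation), so the ramification indices are orders of elements of a subgroup of $S_{2d-2}$ and hence divide $\mathrm{lcm}(1,\dots,2d-2)$, which is coprime to $p$ when $p>2d-2$. The degree of the Galois closure, namely $(2d-2)!$, can certainly exceed $p$. The paper is terse on this point as well, but you should be careful to bound ramification indices rather than total degree when arguing tameness.
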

\begin{proof}
Apply the preceding proposition. Note that, by the hypothesis on the characteristic, any cover $C\to X$ of degree at most $(2d-2)!$ is tamely ramified. Take $L$ to be the compositum (in some fixed separable closure of $K$) of the extensions $K(C)/K$ for the finitely many tamely ramified covers $C\to X$ of degree at most $(2d-2)!$ that are \'etale over $X^\circ$. 
\end{proof}

\begin{prop} 
Let $K$ be the function field of a curve $X/k$ such that the Jacobian is 2-divisible (i.e. for all $P\in J_X(k)$ there exists $Q\in J_X(k)$ with $2Q=P$). Let $S$ be a nonempty finite set of points of $X$ that contains at least one $k$-rational point. Then any $\PP^1$-bundle over $X^\circ = X\setminus S$ is trivial.    
\end{prop}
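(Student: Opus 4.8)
The plan is to reduce the statement to the $2$-divisibility of $\Pic(X^\circ)$ and then to extract this from the hypothesis on $J_X(k)$.

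First I would note that, since $S$ is nonempty, $X^\circ=\Spec\ri{S}$ is an affine curve, i.e. the spectrum of a Dedekind domain. A $\PP^1$-bundle over $X^\circ$ is Zariski-locally trivial, and from the short exact sequence of Zariski sheaves $1\to\mathbb{G}_m\to\GL_2\to\PGL_2\to1$ (the map $\GL_2\to\PGL_2$ has Zariski-local sections) one gets an exact sequence $H^1(X^\circ,\GL_2)\to H^1(X^\circ,\PGL_2)\to H^2(X^\circ,\mathbb{G}_m)$; the last term vanishes because $X^\circ$ has dimension $1$ (Grothendieck vanishing for Zariski cohomology). Hence every $\PP^1$-bundle over $X^\circ$ is $\PP(\mcE)$ for a rank-$2$ vector bundle $\mcE$. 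By the Steinitz structure theorem over the Dedekind domain $\ri{S}$ we have $\mcE\cong\mathcal{O}_{X^\circ}\oplus\mathcal{L}$ with $\mathcal{L}=\det\mcE$. If $\mathcal{L}\cong\mathcal{N}^{\otimes2}$ for some line bundle $\mathcal{N}$, then — again by Steinitz — $\mcE\cong\mathcal{O}_{X^\circ}\oplus\mathcal{N}^{\otimes2}\cong\mathcal{N}\oplus\mathcal{N}$, so $\PP(\mcE)\cong\PP(\mathcal{N}^{\oplus2})\cong\PP^1_{X^\circ}$. Thus it is enough to show that every line bundle on $X^\circ$ is a square in $\Pic(X^\circ)$.

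Second I would compute $\Pic(X^\circ)$ as a quotient of $J_X(k)$. Since $X$ is smooth, $\Pic=\Cl$, and the excision sequence for divisor class groups gives a surjection $\Pic(X)\twoheadrightarrow\Pic(X^\circ)$ whose kernel is generated by the classes $[P]$ for $P\in S$. Choose a $k$-rational point $P_0\in S$ (which exists by hypothesis); then $[P_0]$ maps to $0$ in $\Pic(X^\circ)$, and because $X(k)\neq\emptyset$ we may identify $\Pic^0(X)$ with $J_X(k)$. Given $c\in\Pic(X^\circ)$, lift it to $\tilde c\in\Pic(X)$; then $\tilde c-\deg(\tilde c)\,[P_0]\in\Pic^0(X)$ also maps to $c$. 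Hence the composite $J_X(k)=\Pic^0(X)\hookrightarrow\Pic(X)\twoheadrightarrow\Pic(X^\circ)$ is surjective, so $\Pic(X^\circ)$ is a quotient of $J_X(k)$.

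Since $J_X(k)$ is $2$-divisible by assumption and any quotient of a $2$-divisible abelian group is $2$-divisible, $\Pic(X^\circ)$ is $2$-divisible; in particular every line bundle on $X^\circ$ admits a square root, and by the first step every $\PP^1$-bundle over $X^\circ$ is trivial. The only point needing real care is the first step: one should fix the meaning of ``$\PP^1$-bundle'' (we take it to be $\PP(\mcE)$, which in any case is forced here by the vanishing of the dimension-$1$ Brauer obstruction) and verify that triviality of $\PP(\mcE)$ amounts precisely to $\det\mcE$ being a square; the remaining ingredients are standard facts about curves and modules over Dedekind rings.
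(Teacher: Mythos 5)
Your proof is correct and follows essentially the same strategy as the paper: reduce to a rank-$2$ vector bundle over the Dedekind ring $\ri{S}$, show its determinant is a square by shifting degree to $0$ using a $k$-rational point in $S$ and then invoking $2$-divisibility of $J_X(k)$, and conclude that the projective bundle is trivial. The differences are cosmetic rather than structural: the paper extracts a nowhere-vanishing section via Serre and twists $E$ by a line bundle, whereas you invoke Steinitz directly and package the key fact as $2$-divisibility of $\Pic(X^\circ)$ via a surjection from $J_X(k)$; you also supply the cohomological justification (vanishing of $H^2_{\mathrm{Zar}}(X^\circ,\mathbb{G}_m)$) for why a $\PP^1$-bundle is of the form $\PP(\mcE)$, a step the paper takes for granted.
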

\begin{proof}
Note that $X^\circ= \Spec \ri{S}$ is affine and write $A= \ri{S}$.  A $\PP^1$-bundle over $X^\circ$ is determined by a vector bundle $E$ of rank two on $X^\circ$; i.e. the $\PP^1$-bundle is of the form $\PP(E)\to X^\circ$. For any line bundle $M$ on $X^\circ$, the $\PP^1$-bundle $ \mathbb{P}(E) $ is canonically isomorphic to $ \mathbb{P}(E \otimes M) $. Therefore our strategy will be to replace $E$ by $E \otimes M$ for some line bundle $M$ so that $E \otimes M$ becomes trivial. 

Since $X^\circ$ is affine and one dimensional and $E$ is rank two, $E$ will have a nowhere vanishing section $A  \rightarrow E$ (\cite[Th\'eor\`eme 1]{Serre}). So we have an exact sequence 
$$
0 \rightarrow A \rightarrow E \rightarrow \Lambda^2(E) \rightarrow 0
$$
If we denote the line bundle $\Lambda^2(E)$ by $L$, then we have, for any line bundle $M$
$$
\Lambda^2(E\otimes M)\isom L\otimes M^{\otimes 2}
$$
We claim that $L$ is a square in the Picard group; then we can take $M= L^{\otimes \frac{1}{2}}$ to complete the proof (since then the exact sequence above will split). To see that $L$ is indeed a square, first extend it to a line bundle $\overline{L}$ over $X$. Suppose the degree of $\overline{L}$ is $n$, then take $L^\prime = \overline{L}(-n P)$, where $P$ is a $k$-rational point in $S$. Now $\deg L^\prime =0$ and corresponds to an element of $J_X(k)$, so by the assumption on the divisibility of the Jacobian, $L^\prime$ has a square root $N$. Now $N|_{X^\circ}$ is the desired square root of $L$.
\end{proof}

\begin{cor}
Let $X$ be a curve over an algebraically closed field $k$. Let $K$ be the function field of $X$ and let $S$ be a non-empty finite set of places. Then any element $f\in \Sh(d,K,S)$ has a global model over $K$ with D.G.R.    
\end{cor}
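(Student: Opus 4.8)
The plan is to obtain this as an immediate consequence of the preceding proposition on triviality of $\PP^1$-bundles, after observing that its two hypotheses are automatic once the base field $k$ is algebraically closed. First I would recall, via Remark \ref{r:bundle}, that since $f$ has D.G.R. outside $S$ there is a rank-$2$ vector bundle $\mcE$ on $X^\circ = X\setminus S$ and an $X^\circ$-morphism $\mff\colon \PP(\mcE)\to\PP(\mcE)$ restricting to a model with D.G.R. over each member of some open cover of $X^\circ$; in particular the global object $\mff$ is itself a model for $f/K$ with D.G.R., since good reduction, separability of the fibres, and étaleness of the critical locus can all be checked on the cover. Thus the only remaining task is to trivialize the $\PP^1$-bundle $\PP(\mcE)\to X^\circ$.

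Next I would verify the two hypotheses of the preceding proposition in this setting. Since $S$ is non-empty and $X$ is a complete (smooth) curve, $X^\circ=\Spec\ri{S}$ is affine; and since $k=\bar k$, every closed point of $X$ is $k$-rational, so the non-empty set $S$ certainly contains a $k$-rational point. For $2$-divisibility of the Jacobian: the multiplication-by-$2$ map $[2]\colon J_X\to J_X$ is an isogeny, hence a finite surjective morphism of varieties over the algebraically closed field $k$, hence surjective on $k$-points; therefore $J_X(k)$ is $2$-divisible. This holds regardless of $\Char k$, since $[2]$ remains an isogeny even in characteristic $2$.

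Finally, the preceding proposition then gives that $\PP(\mcE)\to X^\circ$ is trivial. Choosing an isomorphism $\PP(\mcE)\isom \PP^1_{X^\circ}$ and transporting $\mff$ through it produces a morphism $\mff\colon \PP^1_{X^\circ}\to\PP^1_{X^\circ}$ whose generic fibre is $K$-isomorphic to $f$ --- that is, a global model over $X^\circ=\Spec\ri{S}$ --- and D.G.R. is preserved, since an isomorphism of $X^\circ$-schemes carries the étale critical locus to the étale critical locus and leaves separability of the fibres unaffected. This is precisely a global model over $K$ with D.G.R. (outside $S$), as required. The argument is essentially a matter of assembling the pieces; the only point meriting care is the triviality input, and within that the verification that $J_X(k)$ is $2$-divisible, which, as noted, follows from surjectivity of the isogeny $[2]$ on geometric points and is therefore unconditional on the characteristic.
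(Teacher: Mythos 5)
Your proof is correct and follows the same route as the paper: reduce to triviality of the $\PP^1$-bundle from Remark~\ref{r:bundle} via the preceding proposition, and note that its two hypotheses (a $k$-rational point in $S$, and $2$-divisibility of $J_X(k)$) are automatic over an algebraically closed field. The paper states this more tersely, citing only divisibility of $J_X(k)$; you usefully spell out the $[2]$-isogeny argument and the transport of the D.G.R.\ property through the trivializing isomorphism.
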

\begin{proof}
For $k$ algebraically closed, $J_X(k)$ is a divisible group; so we can apply the preceding proposition to find a trivialization of the bundle that occurs in Remark \ref{r:bundle}, thereby obtaining the desired global model.
\end{proof}

\section{Differential Discriminant}\label{s:Equations}

Let $F/K$ be a rational map of degree $d$ written in coordinates as $F(x_0,x_1)= [F_0(x_0,x_1), F_1(x_0,x_1)]$ where $F_0$ and $F_1$ are homogeneous polynomials of degree $d$. The ramification locus $R_F$ of $ F$ in $\PP^1$ is given by the vanishing of the \emph{Wronskian}
 
\[
w_F(x_0,x_1)= \det\left(\frac{\partial F_i}{\partial x_j}\right)
\]

Let $g(x_0,x_1,y_0,y_1) = y_1F_0(x_0,x_1)-y_0F_1(x_0,x_1)$. This is the form defining the graph of $ F$. The branch locus $B_ F$ is the image of the ramification locus, hence it is given by the vanishing of the resultant in $(x_0,x_1)$ of $w_ F(x_0,x_1)$ and $g(x_0,x_1,y_0,y_1) $; that is, by the vanishing of
\[
b_ F(y_0,y_1)=\Res_x(y_1F_0(x_0,x_1)-y_0F_1(x_0,x_1), w_ F(x_0,x_1))
\]

The critical locus $C_ F= B_ F \cup R_ F$ is given by the vanishing of $u_ F(x_0,x_1)w_ F(x_0,x_1)$. Define the \emph{differential discriminant}
\[
\Delta_{\mathrm{diff}}(F) = \disc_x\left(b_F(x_0,x_1)w_F(x_0,x_1)\right)
\]

By definition, the model $ F$ has differential good reduction at $v$ when $|\red_v(C_ F)| = |C_ F|$. If $ F$ has the maximum number of distinct ramification and branch points -- i.e. if $|C_ F|=4d-4$ -- then we call $F$ \emph{differentially separated} (cf. the notion of critically separated in \cite{Petsche}). In that case we have
\[
 F\text{ has D.G.R. at } v \quad \Longleftrightarrow \quad \ord_v( \Delta_{\mathrm{diff}}( F)) = 0
\]

Denote the coefficients of the forms $F_0$ and $F_1$ by $a_1,\dots a_{2d+1}$ and let $\mathrm{Form}_d$ be the affine space $\Spec \ZZ[a_1,\dots a_{2d+1}]$; that is, $\mathrm{Form}_d$ is the space of pairs of homogeneous forms of degree $d$. There is an action of $\mathbb{G}_m\times \GL_2$ on $\mathrm{Form}_d$ given by 
\[
 F^{(\alpha, \Gamma)} =[F_0,F_1]^{(\alpha, \Gamma)} = [\alpha F^\Gamma_0,\alpha F^\Gamma_1]
\]
for $(\alpha, \Gamma) \in \mathbb{G}_m\times \GL_2$, where  $[F^\Gamma_0, F^\Gamma_1]=\Gamma\circ [F_0,F_1]\circ \Gamma^{adj}$.

Under this action, $\Delta_{\mathrm{diff}}( F) $ is a relative invariant; that is, 
\[
\Delta_{\mathrm{diff}}( F^{(\alpha, \Gamma)}) = \alpha^n \det(\Gamma)^m \Delta_{\mathrm{diff}}( F) 
\]
for positive integers $n$ and $m$.

\begin{ex}(Quadratic maps)
For a map of degree 2 one can calculate 
\[
\Delta_{\mathrm{diff}}( F) = 64 \rho^8\theta_1^2 \theta_2^2
\]
where  $\rho$ is the resultant and $\theta_i$ are relative invariants given by 
\begin{eqnarray*}
\theta_1 &=& \tau_1 -2 \rho\\
\theta_2 &=& \tau_1 +6 \rho
\end{eqnarray*}
where $\tau_1$ is the numerator of the rational function of the coefficients given by the first symmetric function in the multipliers of the map (see \cite{SilvermanSpace} and \cite{West} for more on the invariants of rational maps). 

\end{ex}

\begin{ex}(Quadratic map with distinct critical points)
After moving the two critical points to $[0:1]$ and $[1:0]$, one can put it into the form
\[
F = [ax_0^2+cx_1^2\,:\, dx_0^2+fx_1^2]
\]
For such a map, one has 
\[
\Res(F) = (ad-cf)^2
\]
and
\[
\Delta_{\mathrm{diff}}(F) = 64a^2c^2d^2f^2(ad-cf)^{20}
\]
\end{ex}

\section{Auxiliary finiteness result }\label{s:Auxiliary}

First recall the moduli space of $n$ points on the projective line. 
Consider the projective line over a scheme $Y$: i.e. $\PP^1_Y \to Y$. Let $\mcP_{0,n}$ be the functor that assigns to any scheme $Y$ the set of all $n$-tuples $\boldsymbol{\sigma}=(\sigma_i:\, Y\to \PP_Y^1)_{i=1}^n$ of disjoint sections of the structure morphism $\PP_Y^1 \to Y$. This is represented by the scheme 
\[
P_{0,n}=(\PP^1)^n  \setminus \text{diagonals}
\] 
\begin{defn}
Define two $n$-tuples of disjoint sections, $\boldsymbol{\sigma}=(\sigma_i: Y\to \PP_Y^1)_{i=1}^n$ and $\boldsymbol{\sigma^\prime}=(\sigma^\prime_i: Y\to \PP_Y^1)_{i=1}^n$, to be \emph{$Y$-equivalent} if there exists an automorphism ${\gamma \in \Aut_Y(\PP^1_Y) = \PGL_2(Y)}$ which moves one $n$-tuple into the other; i.e. ${\sigma_i =\gamma\circ \sigma_i^\prime}$. If $Y = \Spec A$ is affine, $Y$-equivalent $n$-tuples shall also be said to be $A$-equivalent. 
\end{defn}

Let $\mcM_{0,n}$ denote the functor that, to any scheme $Y$, assigns the set of equivalence classes of elements of $\mcP_{0,n}(Y)$. For any $n\geq 4$,  $\mcM_{0,n}$ is represented by the scheme 
\[
M_{0,n}= \left(M_{0,4}\right)^{n-3} \setminus \text{diagonals}
\]
where
\[
M_{0,4} = \PP^1 \setminus \{0,1,\infty\}
\]

If we write a point of $P_{0,n} $ as $\mathbf{s} = (s_i)_{i=1}^n$, where $s_i=[\alpha_i: \beta_i] \in \PP^1$, then the map 
\[
\pi: P_{0,n} \to M_{0,n}
\]
taking an $n$-tuple to the corresponding point in the moduli space is given by 
\[
\mathbf{s} \mapsto \left([123i](\mathbf{s})\right)_{i=4}^n
\]
The notation $[ijkl](t)$ denotes the cross-ratio
\[
[ijkl](\mathbf{s}) = \frac{[ik][jl]}{[il][jk]}(\mathbf{s})
\]
where
\[
[ij](\mathbf{s})= \alpha_i\beta_j - \alpha_j\beta_i
\]
Note that we can treat $[ijkl](t)$ as a point of $\PP^1\setminus \{0,1,\infty\}$. \\

In the sequel we work over a scheme $X$ that we take to be the spectrum of the ring of integers of a number field  or a  smooth proper curve over a  field $k$, where $k$ is either a finite field or an algebraically closed field. Let $K$ be the function field of $X$. Let $S$ be a non-empty finite set of (finite) places of~$K$.

\begin{defn}\label{def:AdmissibleTuple}
Let $X$ be a curve  defined over a field $k$, we say that an $n$-tuple ${\boldsymbol{\sigma}=(\sigma_i: \Spec \ri{S}\to \PP_{ \ri{S}}^1)_{i=1}^m}$ is  \emph{$\ri{S}$-trivial} if it is $\ri{S}$-equivalent to an $n$-tuple consisting of constant sections. Note that in this case, the restriction of the sections to the generic fiber can be specified as a set of $k$-points of~$\PP^1$.  

A non-$\ri{S}$-trivial $n$-tuple ${\boldsymbol{\sigma}=(\sigma_i: \Spec \ri{S}\to \PP_{ \ri{S}}^1)_{i=1}^m}$ induces a non-constant morphism of $X^\circ$  onto a curve in the moduli space $M_{0,n}$. If, moreover, this morphism is separable, then we shall say that ${\boldsymbol{\sigma}}$ is \emph{admissible}.
\end{defn}

\begin{prop}\label{prop:finiteSections}
Let  $X,K,S$ be as above and let $n$ be a positive integer.  Then there are  at most finitely many $\ri{S}$-equivalence classes of $n$-tuples $${\boldsymbol{\sigma}=(\sigma_i: \Spec \ri{S}\to \PP_{ \ri{S}}^1)_{i=1}^m}$$ of disjoint admissible sections.  
\end{prop}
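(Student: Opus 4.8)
The strategy is to reduce the statement to the finiteness of integral points on $M_{0,n}$, or equivalently to the $S$-unit theorem, via the cross-ratio map $\pi: P_{0,n}\to M_{0,n}$ recalled above. First I would observe that an $\ri{S}$-equivalence class of $n$-tuples of disjoint sections $\boldsymbol{\sigma}$ is determined by a morphism $X^\circ = \Spec\ri{S}\to M_{0,n}$, namely the composite $\pi\circ\boldsymbol{\sigma}$, together with the choice of a lift — but since all lifts are $\ri{S}$-equivalent (an automorphism of $\PP^1_{\ri{S}}$ carrying one disjoint $n$-tuple to another with the same cross-ratios can be built by normalizing the first three sections to $0,1,\infty$), the class is determined by the morphism $X^\circ\to M_{0,n}$ alone. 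So it suffices to prove finiteness of the set of morphisms $X^\circ\to M_{0,n}$ that arise from admissible $\boldsymbol{\sigma}$. Since $M_{0,n} = (M_{0,4})^{n-3}\setminus\text{diagonals}$ and $M_{0,4}=\PP^1\setminus\{0,1,\infty\}$, a morphism $X^\circ\to M_{0,n}$ is an $(n-3)$-tuple of morphisms $X^\circ\to \PP^1\setminus\{0,1,\infty\}$ with pairwise distinct images (plus avoiding the "$=1$"-type diagonals), so it is enough to bound the number of morphisms $X^\circ\to \PP^1\setminus\{0,1,\infty\}$, i.e. the number of $\ri{S}$-points of $\PP^1\setminus\{0,1,\infty\}$ — equivalently, pairs $(u,1-u)$ with $u$ and $1-u$ both $S$-units.

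In the number field case, and in the function field case with $k$ finite, this is exactly the $S$-unit theorem (Siegel–Mahler–Lang): the group $\ri{S}^\times$ is finitely generated, and the equation $u + v = 1$ in $S$-units has only finitely many solutions; hence $\PP^1\setminus\{0,1,\infty\}$ has only finitely many $\ri{S}$-points. This handles the constant-section locus and, more importantly, the finitely many possible morphisms. In the case $k$ algebraically closed, the $S$-unit equation can have infinitely many solutions, but they all come from constant maps (Mason's theorem / the function-field $abc$ inequality: a non-constant solution of $u+v=1$ would force a bound on degrees in terms of the number of bad places, and the only solutions are the constant ones once one quotients appropriately) — and here the admissibility hypothesis does no work for constant sections, so one must argue that the $\ri{S}$-trivial tuples themselves form finitely many classes. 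Since an $\ri{S}$-trivial admissible... wait — $\ri{S}$-trivial tuples are by definition excluded from being "admissible" only in the non-trivial case; here I should note that the Proposition's hypothesis is that the sections are disjoint and admissible, and admissible tuples are non-$\ri{S}$-trivial by definition, so over $k$ algebraically closed the associated morphism $X^\circ\to M_{0,n}$ is non-constant and separable, and one applies the de Franchis–Severi-type finiteness (or directly the fact that a non-constant separable map from a fixed affine curve to $M_{0,4}=\PP^1\setminus\{0,1,\infty\}$, which has non-trivial fundamental group / is hyperbolic in the relevant sense, has bounded degree, and there are finitely many such) to conclude.

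The main obstacle is the function-field case in characteristic $p$: this is precisely why the definition of \emph{admissible} was introduced. For $k$ finite one can use the $S$-unit theorem directly as above, but one must be careful that the finitely generated group $\ri{S}^\times$ argument still applies — it does, since $\ri{S}^\times$ is finitely generated for $\ri{S}$ the $S$-integers in a global function field. For $k$ algebraically closed of characteristic $p$, the subtlety is that a separable non-constant morphism $X^\circ\to M_{0,4}$ still satisfies a Riemann–Hurwitz bound, but an \emph{inseparable} one does not — hence the separability hypothesis in the definition of admissible is exactly what rules out the pathological infinite families (such as those obtained by precomposing a fixed section-tuple with Frobenius). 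So the key step, and the one requiring the most care, is to check that: (i) the cross-ratio coordinates of an admissible tuple give a separable non-constant map to each $M_{0,4}$ factor, and (ii) separable non-constant maps from the fixed curve $X^\circ$ to $\PP^1\setminus\{0,1,\infty\}$ form a finite set — the latter via Riemann–Hurwitz bounding the degree (three branch points force genus/Euler-characteristic constraints) together with the finiteness of maps of bounded degree between fixed curves. Assembling these factor-wise bounds and accounting for the diagonal conditions in $M_{0,n}$ then gives the stated finiteness.
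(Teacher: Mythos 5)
Your overall reduction via the cross-ratio map $\pi: P_{0,n}\to M_{0,n}$ is the right starting point, and it matches the paper's first step. But there are two genuine gaps in the characteristic-$p$ function field case, and that case is precisely where the admissibility hypothesis must do all the work.

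First, the claim that the $S$-unit theorem gives finiteness of solutions to $u+v=1$ in $\ri{S}^\times$ when $K$ is a function field over a finite field $k$ is false. In characteristic $p$, if $(u,v)$ is a solution then so is $(u^p,v^p)$, since $(u+v)^p=u^p+v^p$, and iterating the Frobenius yields infinitely many distinct solutions as soon as $u\notin\overline{\FF}_p$. The unit equation over a global function field only has finitely many solutions \emph{outside} the subgroup $A=(K^\times)^p$ (and similarly $A=k^\times$ in characteristic zero). The paper isolates exactly this: it introduces $A$, forms the finite set $\mcH_n$ of cross-ratios not lying in $A$ (finite by \cite[Theorem 7.6.1]{EGbook}), and then must do extra work to recover the finiteness of all cross-ratios $\mcG_n$.

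Second, your step (i) — that admissibility forces each individual cross-ratio coordinate $[123i]$ to give a separable non-constant map $X^\circ\to M_{0,4}$ — is not justified and is not what admissibility provides. Admissibility says the full map $X^\circ\to M_{0,n}$ is non-constant and separable; a single coordinate projection composed with it can nevertheless be constant or land in $(K^\times)^p$. The paper's key observation is that admissibility guarantees \emph{at least one} index $\ell$ with $[123\ell]\notin A$; one then uses the multiplicative cross-ratio identity $[123i]=[123\ell]\cdot[12\ell i]$ to write every $[123i]$ as a product of two elements of $\mcH_n\cup\{1\}$ (if $[123i]\in A$ then $[12\ell i]\notin A$, since $A$ is a subgroup). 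Thus $\mcG_n\subset\{xy:x,y\in\mcH_n\cup\{1\}\}$ is finite. Your Riemann--Hurwitz / de Franchis argument for algebraically closed $k$ gestures at the right phenomenon but is also coordinate-by-coordinate, so it has the same issue, and it doesn't treat the finite-$k$ case at all. To repair the proof you would need this passage from ``the joint map is separable non-constant'' to ``some coordinate is a non-$A$ unit,'' plus the cross-ratio identity to propagate this to all coordinates.
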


\begin{proof}
Denote by $\mathcal{F}_n$ the set of all such $n$-tuples $\boldsymbol{\sigma}$ satisfying the hypotheses of the proposition. We must show that the image of $\mcF_n$ under $\pi$ is a finite set.

By the explicit description of $\pi$ in terms of cross-ratios, it suffices to show that the set of cross-ratios 
\[
\mcG_n =  \left\{ [ijkl](\boldsymbol{\sigma}) \; :\; \boldsymbol{\sigma}\in \mcF_n\right\}   
\]
is finite. The elements of $\mcG_n$ are $\ri{T}$-points of $\PP^1\setminus \{0,1,\infty\}$; hence they are $T$-units. Moreover, the cross-ratios satisfy the identity 
\[
[ijkl](\boldsymbol{\sigma})+[ikjl](\boldsymbol{\sigma})=1
\]
In the case that $K$ is a number field, we can now conclude finiteness of $\mcG_n$ from the finiteness of solutions to the unit equation (see \cite{EGbook} for an exposition of the unit equation, generalizations and effective versions). In the case of a function field $K$, we cannot immediately conclude finiteness: if we set 
$A=k^\times$ if $\Char K = 0$ and $A=(K^\times)^p$ if $\Char K = p>0$, then the unit equation may have infinitely many solutions in $A$.  

To deal with that issue, consider the set 
\[
\mcH_n = \left\{ [ijkl](\boldsymbol{\sigma}) \; :\; \boldsymbol{\sigma}\in \mcF_n, \text{ and }    [ijkl](\boldsymbol{\sigma})\notin A \right\}   
\]

The set of solutions to the unit equation $u+v=1$  with $u,v\in \ri{T}^\times\setminus A$ is finite (see \cite[Theorem 7.6.1]{EGbook}), therefore $\mcH_n$ is a finite set.

This implies that the set $\mcG_n$ is finite, since
\[
\mcG_n \subset \left\{xy\; : \; x,y \in \mcH_n\cup\{1\} \right\}
\]
To see this, note the identity of cross ratios
\[
[123i](\mathbf{t})=[123l](\mathbf{t})\cdot[12l i](\mathbf{t})
\]
Secondly, observe that, since $M_{0,n}$ is a fine moduli space and $\boldsymbol{\sigma}$ is admissible, for any $\boldsymbol{\sigma} \in \mcF_n$, there exists an index $\ell$ such that $[123\ell](\boldsymbol{\sigma})\notin A$. Then, for any index $[123i](\boldsymbol{\sigma}) \in \mcG_n$, either $[123i](\boldsymbol{\sigma})\notin A$, so we are done, or  $[123i](\boldsymbol{\sigma})\in A$. In the latter case, the equation 
\[
[123i](\mathbf{t})=[123\ell](\mathbf{t})\cdot[12\ell i](\mathbf{t})
\]
together with the fact that $[123\ell](\boldsymbol{\sigma})\notin A$ implies that $[12\ell i](\boldsymbol{\sigma})\notin A$, so $[123i](\boldsymbol{\sigma})$ is indeed of the form $xy$ with $ x,y \in \mcH_n$. 
\end{proof}

\begin{rem}
This proof is a generalization of the one given in \cite{EG} for finiteness of polynomials with given discriminant.
\end{rem}



\begin{defn}
Let $D$ be a reduced effective divisor on $\PP^1_K$; i.e. a \mbox{$\Gal(\overline{K}/K)$-stable} set of points in $\PP^1(\overline{K})$. For an extension $L/K$, we say two such reduced effective divisors $D$ and $D^\prime$ on $\PP^1_K$ are \emph{$L$-equivalent} if there exists an element $\gamma \in \PGL_2(L)$ that takes the points of $D$ to those of $D^\prime$. 
\end{defn}

\begin{lem}\label{lem:finiteExtension}
Let $L/K$ be a finite extension. Let $\mcF$ be a set of reduced effective divisors of degree $n\geq 3$ on $\PP_K^1$ such that the points of each $D\in \mcF$ are defined over $L$. Then each $L$-equivalence class in $\mcF$ consists of at most finitely many $K$-equivalence classes.
\end{lem}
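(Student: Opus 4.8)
The plan is to reduce the assertion to the finiteness of a set of Galois $1$-cocycles valued in a finite group. Fix one divisor $D$ in the given $L$-equivalence class and let
\[
A \;=\; \{\, g \in \PGL_2(\overline{K}) \;:\; g(D)=D \,\}
\]
be the stabilizer of the underlying point set. Since $D$ has $n\geq 3$ points, $A$ is finite: the action of $A$ on the points of $D$ gives an embedding $A \hookrightarrow \Perm(D)\cong S_n$, because an automorphism of $\PP^1$ fixing three points is the identity. Moreover, since the points of $D$ are all $L$-rational, every $g\in A$ lies in $\PGL_2(L)$: if $p_1,p_2,p_3$ are three distinct points of $D$, then $g$ is the unique element of $\PGL_2(\overline{K})$ carrying the triple $(p_1,p_2,p_3)$ to the triple $(g(p_1),g(p_2),g(p_3))$, and by Galois descent (all six points being $L$-rational) that element is defined over $L$. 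Finally, letting $M/K$ be the Galois closure of $L/K$, we get $A\subseteq \PGL_2(L)\subseteq \PGL_2(M)$, and since $D$ is defined over $K$ the natural action of $\Gal(M/K)$ on $\PGL_2(M)$ stabilizes $A$.

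Next I would attach a cocycle to each divisor in the class. Any $D'\in\mcF$ that is $L$-equivalent to $D$ can be written $D'=\gamma(D)$ with $\gamma\in\PGL_2(L)$. Because $D'$ is a divisor on $\PP^1_K$, it is stable under $\Gal(\overline{K}/K)$; so for every $\sigma\in\Gal(\overline{K}/K)$, using $\sigma(D)=D$, one has $\sigma(\gamma)(D)=\sigma(\gamma)\bigl(\sigma(D)\bigr)=\sigma\bigl(\gamma(D)\bigr)=\sigma(D')=D'=\gamma(D)$, hence $c^\gamma_\sigma:=\gamma^{-1}\sigma(\gamma)\in A$. Since $\gamma$ has coordinates in $M$, $c^\gamma_\sigma$ depends only on $\sigma|_M$, so this defines a function $c^\gamma\colon \Gal(M/K)\to A$ (in fact a $1$-cocycle, though I will not need the cocycle identity).

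The crux is the following claim: if $\gamma_1,\gamma_2\in\PGL_2(L)$ give the same function, $c^{\gamma_1}=c^{\gamma_2}$, then $\gamma_1(D)$ and $\gamma_2(D)$ are $K$-equivalent. Indeed, from $\gamma_1^{-1}\sigma(\gamma_1)=\gamma_2^{-1}\sigma(\gamma_2)$ for all $\sigma$, a direct manipulation yields $\sigma\bigl(\gamma_2\gamma_1^{-1}\bigr)=\gamma_2\gamma_1^{-1}$, so $\delta:=\gamma_2\gamma_1^{-1}$ lies in $\PGL_2(\overline{K})^{\Gal(\overline{K}/K)}=\PGL_2(K)$, the last equality being Galois descent for $\PGL_2$ (Hilbert $90$ applied to $1\to\mathbb{G}_m\to\GL_2\to\PGL_2\to 1$); and then $\delta\bigl(\gamma_1(D)\bigr)=\gamma_2(D)$. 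Since there are at most $|A|^{[M:K]}$ functions from $\Gal(M/K)$ to $A$, only finitely many $c^\gamma$ occur, and hence the divisors $D'\in\mcF$ that are $L$-equivalent to $D$ fall into at most $|A|^{[M:K]}$ classes for $K$-equivalence. As $D$ was an arbitrary representative of its $L$-equivalence class, this proves the lemma.

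The step I expect to require the most care is the interplay with the possibly non-Galois extension $L/K$ (handled by passing to the Galois closure $M$) together with the two descent facts $A\subseteq \PGL_2(L)$ and $\PGL_2(\overline{K})^{\Gal(\overline{K}/K)}=\PGL_2(K)$; these are precisely the places where the hypotheses that $D$ has at least three points (so that $A$ is finite) and that its points are $L$-rational get used, and without $n\geq 3$ the stabilizer $A$ is infinite and the argument collapses. Everything else is a pigeonhole argument on the finite set of functions $\Gal(M/K)\to A$.
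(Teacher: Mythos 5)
Your proof is correct and follows the same underlying strategy as the paper's: attach to each divisor in the $L$-equivalence class a function from the Galois group to a finite set measuring the failure of the conjugating $\gamma$ to be Galois-equivariant, show via Galois descent for $\PGL_2$ that equal invariants force $K$-equivalence, and conclude by pigeonhole. The paper records the invariant as the tuple of bijections $\bigl(\gamma_D\circ\tau\circ\gamma_D^{-1}\big|_{\bar{D}_0}\bigr)_{\tau\in\Gal(L/K)}$ of $\bar{D}_0$, whereas you identify it directly as the $1$-cocycle $\sigma\mapsto\gamma^{-1}\sigma(\gamma)$ valued in the finite stabilizer $A$; these are equivalent data since an element of $\PGL_2$ is determined by its action on three points of $D$, and your explicit passage to the Galois closure $M$ makes rigorous a point that the paper, which applies $\Gal(L/K)$ to a possibly non-Galois $L/K$, leaves implicit.
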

\begin{proof}
(C.f. the proof of \cite[Theorem 4]{Petsche}.) Let  $c$ be an $L$-equivalence class in $\mcF$ and  fix some $D_0\in c$; note that $D_0$ is a set $\bar{D}_0$ of $n$ points  in $\PP^1(\overline{K})$. For any other element $D\in c$, corresponding to some set of points $\bar{D}\subset \PP^1(\overline{K})$, there exists a $\gamma_{D} \in \PGL_2(L)$ that maps $\bar{D}$ bijectively to $\bar{D}_0$. Note that $\Gal(L/K)$ acts on the set $\bar{D}$. Write $\Gal(L/K)=\{\tau_1,\dots,\tau_m\}$,  and define a bijection $\iota_D : (\bar{D}_0)^m\to (\bar{D}_0)^m$ by
$$
\iota_D(x_1,\dots, x_m) = (\gamma_{D}\circ\tau_1\circ\gamma_{D}^{-1}(x_1), \dots , \gamma_{D}\circ\tau_m\circ\gamma_{D}^{-1}(x_m))
$$
We claim that $\iota_{D} = \iota_{E}$, implies that $D$ is $K$-equivalent to $E$, for $D$ and $E$ in $\mcF$. Since there are only finitely many possible bijections from the finite set  $\bar{D}_0$ to itself, this claim will complete the proof. 

To prove the claim, suppose $\iota_{D} = \iota_{E}$ and let $\gamma = \gamma_D\circ \gamma_{E}^{-1}$. Then $\gamma$ is an $L$-equivalence between $D$ and $E$. We must show that $\gamma \in \PGL_2(K)$; i.e. we want to show that $\gamma = \prescript{\tau}{}{\gamma}$ for all $\tau\in \Gal(L/K)$. Since $\iota_{D} = \iota_{E}$, we have $\gamma \circ \tau (x) = \tau\circ\gamma (x)$ for each $x\in \bar{E}$ and for each $\tau\in \Gal(L/K)$; so $\gamma $ and $\prescript{\tau}{}{\gamma}$ take the same values on the set $\tau(\bar{E})$. We can conclude that $\gamma = \prescript{\tau}{}{\gamma}$, since $\tau(\bar{E})$ contains at least three points and an element of $\PGL_2(L)$ is uniquely determined by where it sends three points. 
\end{proof}

\begin{defn}
 We shall say that a reduced effective divisor $D$ on $\PP^1_K$ is \emph{$K$-trivial} if there exists a reduced effective divisor $D_0$ on $\PP^1_k$, i.e. defined over the base field $k$, such that $D$ is $K$-equivalent to $(D_0)_K$. We shall say that $D$ is \emph{isotrivial} if there exists a reduced effective divisor $D_0$ on $\PP^1_{\overline{k}}$, such that $D_{\overline{K}}$ is $\overline{K}$-equivalent to $(D_0)_{\overline{K}}$. 
Define the \emph{splitting degree over $K$} of a reduced effective divisor $D$ on $\PP^1_K$ to be the minimum degree of an extension $L/K$ such that each of the points of $D$ is $L$-rational; note that this is at most $(\deg D)!$. 
\end{defn}

\begin{defn} 
Let $D$ be a reduced effective divisor on $\PP^1_K$ and let $U$ be an open subset of $X$. By a \emph{model $(\mathfrak{D}, \varphi)$ for $D$ over $U$} we shall mean a divisor $\mathfrak{D}$ of $\PP^1_U$ that dominates $U$, together with a $K$-isomorphism $\varphi: \PP^1_K \xrightarrow{\sim} (\PP^1_U)_K$ under which $\mathfrak{D}$ pulls back to $D$.  We shall say that $D$ has \emph{good reduction} at a place $v$ of $K$ if there is a model $\mathfrak{D}$ for $D$ over an open set $U$ containing $v$ for which $\mathfrak{D}$ is \'etale over $U$. Intuitively, this means that the ``reduction modulo $v$'' of $\Supp D$  has the same number of points as $\Supp D$.
\end{defn}

Let $n\geq 3$ be an integer. Denote by $\Sh_\mathrm{div}(n,\delta, K,S)$ the set of non-isotrivial reduced effective divisors of $\PP^1_K$ that have good reduction at each place $v$ not in $S$, that are supported on at least three points, that have degree at most $n$ and splitting degree over $K$ at most $\delta$.

\begin{prop}\label{prop:modelsForDivisors}
Let $n,\delta,K,S$ be as above. Suppose $\Char K > \delta$. Then there exists a finite extension $L/K$ such that any element of $\Sh_\mathrm{div}(n,\delta, K,S)$ has a model over $X_L^\circ = X_L \setminus T$ that is split \'etale over $X^\circ_L$, where $T$ is the set of places of $L$ above $S$.  
\end{prop}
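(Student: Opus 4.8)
The plan is to reduce Proposition \ref{prop:modelsForDivisors} to Proposition \ref{prop:finiteSections} together with the finite-extension comparison Lemma \ref{lem:finiteExtension}. First I would dispose of the splitting-degree issue: given $D \in \Sh_\mathrm{div}(n,\delta,K,S)$, by definition the points of $D$ become rational over some extension of $K$ of degree at most $\delta$. Let $L_0/K$ be the compositum (inside a fixed separable closure) of all extensions of $K$ of degree at most $\delta$ that are unramified outside $S$; since $\Char K > \delta$ all such extensions are tamely ramified, and by the smallness of the tame fundamental group of $X^\circ$ (as discussed after Proposition \ref{prop:GlobalModels}) there are only finitely many of them, so $L_0/K$ is finite. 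I would argue that a point of $D$ generates a tamely ramified extension unramified outside $S$: good reduction of $D$ outside $S$ means the model $\mathfrak{D}$ is étale over $X^\circ$, so the residue field extensions at places outside $S$ are unramified, and tameness comes from the characteristic hypothesis; hence every point of $D$ is $L_0$-rational. Thus all divisors in our set acquire split support over the single finite extension $L_0$.

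Next, working over $X_{L_0}$, extend $S$ to the set $T_0$ of places of $L_0$ lying over $S$. I claim that after base change to $L_0$, a divisor $D$ still has good reduction outside $T_0$ and now its points are $L_0$-rational, so each gives a section $\sigma_i : X_{L_0}^\circ \to \PP^1_{X_{L_0}^\circ}$; the étaleness of the model over $X^\circ$ (which is preserved by the flat base change $X_{L_0} \to X$, at least over the locus away from $T_0$) guarantees these sections are pairwise disjoint over $X_{L_0}^\circ$, because two points that are distinct on the generic fiber and whose reductions never collide cannot meet. So to each $D$ we associate an $n'$-tuple of disjoint sections over $\ri{T_0}$, where $n' = \deg D \le n$ (and $n' \ge 3$). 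To apply Proposition \ref{prop:finiteSections} I must check admissibility of these tuples: a non-isotrivial divisor should give rise to a non-isotrivial, separably-varying tuple. Here I would need to relate non-isotriviality of $D$ to non-$\ri{T_0}$-triviality and separability of the induced map to $M_{0,n'}$; this is where a genuine argument is needed (see the next paragraph). Granting admissibility, Proposition \ref{prop:finiteSections} gives finitely many $\ri{T_0}$-equivalence classes of such tuples, hence finitely many $L_0$-equivalence classes of the corresponding divisors.

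To upgrade from finiteness of $L_0$-equivalence classes to finiteness of $K$-equivalence classes — which is what is implicitly needed to conclude, and more precisely to produce the uniform extension $L$ in the statement — I would invoke Lemma \ref{lem:finiteExtension} with the extension $L_0/K$: each $L_0$-equivalence class among our divisors breaks into only finitely many $K$-equivalence classes. Finally, to get the clean conclusion as stated, I would take $L$ to be precisely $L_0$ and observe that the finitely many $\ri{T_0}$-equivalence classes of section-tuples furnished by Proposition \ref{prop:finiteSections} are represented by tuples of \emph{disjoint} sections over $X_{L_0}^\circ$, i.e. by models of the corresponding divisors that are split étale over $X_{L_0}^\circ = X_L \setminus T$; so every element of $\Sh_\mathrm{div}(n,\delta,K,S)$ has such a model over $L$, which is the assertion.

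The main obstacle I expect is the admissibility verification: showing that a non-isotrivial reduced effective divisor $D$ (supported on $\ge 3$ points, with good reduction outside $S$) gives rise, after the base change to $L_0$, to an \emph{admissible} tuple of sections in the sense of Definition \ref{def:AdmissibleTuple} — that is, that the induced morphism $X_{L_0}^\circ \to M_{0,n'}$ is non-constant \emph{and separable}. Non-constancy should follow from non-isotriviality of $D$ together with the fact that $M_{0,n'}$ is a fine moduli space (so an isotrivial family gives a constant moduli map, and conversely a constant moduli map forces the divisor to be, up to $\ol{K}$-equivalence, constant). Separability is the delicate point in characteristic $p$: one must argue that the characteristic hypothesis $\Char K > \delta$ — which controls the ramification of the splitting extension — is enough to preclude the modular map from factoring through Frobenius, or else restrict to divisors for which this is built in; I would handle this by noting that the field of definition of the moduli map is contained in $L_0$, whose degree over $K$ is prime to $p$, so no purely inseparable phenomena can intervene. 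The remaining steps — compatibility of good reduction with base change, disjointness of sections from étaleness of the model, and the bookkeeping of places over $S$ — are routine.
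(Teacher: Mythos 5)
Your first two paragraphs contain what the paper's proof actually consists of: the hypothesis $\Char K > \delta$ forces every degree-$\leq\delta$ splitting extension to be tamely ramified, the good-reduction hypothesis forces such a splitting extension to be unramified outside $S$, the smallness of the tame fundamental group of $X^\circ$ yields only finitely many such covers, and taking $L$ to be their compositum gives a single finite extension over which the support of every $D \in \Sh_\mathrm{div}(n,\delta,K,S)$ becomes rational and the \'etale model becomes split \'etale over $X_L^\circ$. That is the entire content of Proposition \ref{prop:modelsForDivisors}, and the paper's terse proof says precisely this by pointing at Corollary \ref{cor:UniformGlobalModels}.

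However, the overall plan you announce --- reducing the proposition to Proposition \ref{prop:finiteSections} together with Lemma \ref{lem:finiteExtension} --- has the logical dependencies inverted, and this makes the second half of your write-up (paragraphs three and four) both superfluous and, if taken at face value, a genuine flaw. Proposition \ref{prop:modelsForDivisors} makes no finiteness claim about equivalence classes; it is a pure existence statement for a uniform extension $L$, and it is established \emph{first}, directly from the smallness of the tame fundamental group, precisely so that Proposition \ref{prop:finiteSections} and Lemma \ref{lem:finiteExtension} can subsequently be applied (in the remark and Theorem \ref{thm:finiteness} that follow). In particular the proposition is stated for $\Sh_\mathrm{div}$, not for the admissible subset $\Sh^\star_\mathrm{div}$, so the admissibility verification you flag as your ``main obstacle'' is not an obstacle here at all --- it simply plays no role. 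Worse, if your plan were followed literally (first apply Proposition \ref{prop:finiteSections}, which requires admissibility, then conclude), you would only have treated admissible divisors and the stated proposition for arbitrary $D \in \Sh_\mathrm{div}(n,\delta,K,S)$ would not follow. The correct and complete proof is the one you wrote before you began ``reducing'': compositum of the finitely many tame covers of $X^\circ$ of degree $\leq\delta$, combined with descent of \'etaleness along the base change $X_L^\circ \to X^\circ$. Discard the last two paragraphs.
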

\begin{proof}
Under the hypotheses of the proposition, $X^\circ$ has small fundamental group, so the statement can be proved along the same lines as the proofs of Proposition \ref{prop:GlobalModels} and Corollary \ref{cor:UniformGlobalModels}.
\end{proof}

\begin{rem}
Let $L/K$ be the finite extension whose existence is asserted in Proposition \ref{prop:modelsForDivisors}.  For each element $D$ of $\Sh_\mathrm{div}(n,\delta, K,S)$, we can take a split \'etale model $\mathfrak{D}$ for $D$ over $X^\circ_L = \Spec \ri{T}$ as in Proposition \ref{prop:modelsForDivisors}. Such a $\mathfrak{D}$ is the image of a set  $${\boldsymbol{\sigma}=\{\sigma_i: \Spec \ri{T}\to \PP_{ \ri{T}}^1\}_{i=1}^m}$$ 
 of $m$ disjoint non-$\ri{T}$-trivial sections, where $ 3\leq m\leq n$; after arbitrarily assigning a ordering, this gives us an element of $\mcP_{0,m}(\ri{T})$. 
\begin{defn}
We shall say that  an element $D$ of $\Sh_\mathrm{div}(n,\delta, K,S)$ is \emph{admissible} if for some choice $\boldsymbol{\sigma}$ of $m$-tuple associated to $D$ as above, $\boldsymbol{\sigma}$ is an admissible $m$-tuple in the sense of Definition \ref{def:AdmissibleTuple}. Denote that set of admissible elements of $\Sh_\mathrm{div}(n,\delta, K,S)$ by $\Sh^\star_\mathrm{div}(n,\delta, K,S)$.
\end{defn}

 For each $m$ with $ 3\leq m\leq n$, denote by $\mcF_m$ the set of admissible $m$-tuples of sections obtained for each element of $\Sh^\star_\mathrm{div}(n,\delta, K,S)$ as just described; thus we have a bijection $\theta: \Sh^\star_\mathrm{div}(n,\delta, K,S) \to \coprod\mcF_m$.  By  Proposition \ref{prop:finiteSections}, there are  at most finitely many $\ri{T}$-equivalence classes in $\mcF_m$. Two divisors $D$ and $D^\prime$ are $L$-equivalent if $\theta(D)$ and $\theta(D^\prime)$ are $\ri{T}$-equivalent. Therefore we have shown that there are at most finitely many \mbox{$L$-equivalence} classes in $\Sh^\star_\mathrm{div}(n,\delta, K,S)$.  By Lemma \ref{lem:finiteExtension}: each $L$-equivalence class splits into at most finitely many \mbox{$K$-equivalence} classes, so we have proved the following:
\end{rem}

\begin{thm}\label{thm:finiteness}
Let $n\geq 3$, $\delta,\,K$ and $S$ be as above. Suppose $\Char K > \delta$ or $\Char K = 0$. There are at most finitely many $K$-equivalence classes of elements of $\Sh^\star_\mathrm{div}(n,\delta, K,S)$. 
\end{thm}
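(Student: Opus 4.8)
The plan is to transport the problem, via a single auxiliary finite extension, into the setting of admissible tuples of sections, where Proposition~\ref{prop:finiteSections} gives finiteness, and then to descend back to $K$.

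First I would apply Proposition~\ref{prop:modelsForDivisors}, which is available because the hypothesis $\Char K>\delta$ (or $\Char K=0$) forces $X^\circ$ to have small fundamental group: this produces \emph{one} finite extension $L/K$ such that every $D\in\Sh^\star_\mathrm{div}(n,\delta,K,S)$ has a model $\mathfrak D$ over $X^\circ_L=\Spec\ri T$ which is split \'etale, where $T$ is the set of places of $L$ over $S$. Split \'etaleness means $\mathfrak D$ is the disjoint union of the images of finitely many sections $\sigma_i\colon\Spec\ri T\to\PP^1_{\ri T}$, say $m$ of them with $3\le m\le n$ (the lower bound because $D$ is supported on at least three points, the upper because $\deg D\le n$); in particular the points of $D$ are then $L$-rational. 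Choosing an arbitrary order on the $\sigma_i$ yields $\boldsymbol\sigma=\theta(D)\in\mcP_{0,m}(\ri T)$, and the non-isotriviality and admissibility built into the definition of $\Sh^\star_\mathrm{div}$ guarantee that $\boldsymbol\sigma$ is a non-$\ri T$-trivial admissible $m$-tuple, i.e. $\theta(D)\in\mcF_m$ in the notation of Proposition~\ref{prop:finiteSections}.

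Next, for each of the finitely many values $m\in\{3,\dots,n\}$, Proposition~\ref{prop:finiteSections} tells us $\mcF_m$ has only finitely many $\ri T$-equivalence classes. Since an element of $\PGL_2(\ri T)\subset\PGL_2(L)$ carrying $\theta(D)$ to $\theta(D')$ in particular carries the support of $D$ to that of $D'$ as unordered sets, $\ri T$-equivalence of the tuples implies $L$-equivalence of the divisors; hence $\Sh^\star_\mathrm{div}(n,\delta,K,S)$ falls into finitely many $L$-equivalence classes. Finally, because each $D$ has all its points defined over $L$ and degree at least $3$, Lemma~\ref{lem:finiteExtension} shows that each such $L$-equivalence class decomposes into finitely many $K$-equivalence classes. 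Combining the two finiteness statements yields the theorem.

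Essentially all the substance is in the three cited results; the only real care needed — and the point where the characteristic-$p$ hypotheses are genuinely used beyond invoking small fundamental group — is checking that the isotriviality and admissibility hypotheses on the divisor $D$ propagate correctly to the tuple $\boldsymbol\sigma$, so that Proposition~\ref{prop:finiteSections} applies, and that the definition of admissible divisor via $\theta$ is coherent (independent, up to $\ri T$-equivalence, of which split \'etale model and which ordering one picks). This bookkeeping is exactly what the Remark preceding the statement arranges, so the proof amounts to assembling it.
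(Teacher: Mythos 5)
Your proposal follows exactly the same route the paper takes: apply Proposition~\ref{prop:modelsForDivisors} to obtain a single extension $L/K$ over which every $D\in\Sh^\star_\mathrm{div}(n,\delta,K,S)$ has a split \'etale model, convert the model into an admissible $m$-tuple of disjoint sections, invoke Proposition~\ref{prop:finiteSections} to get finitely many $\ri{T}$-equivalence (hence $L$-equivalence) classes, and then descend to $K$ via Lemma~\ref{lem:finiteExtension}. This is precisely the argument laid out in the remark immediately preceding the theorem, and your additional observation that the passage from $D$ to $\boldsymbol{\sigma}$ should be checked to be well-defined up to $\ri{T}$-equivalence is a fair bookkeeping point that the paper leaves implicit.
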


\section{Rigidity}

\begin{prop}\label{prop:rigidity}
Fix a finite set $Y \subset \PP^1(\bar{K})$. There exist only finitely many morphisms $f: \PP_K^1 \to \PP_K^1$ satisfying
\begin{enumerate}
\item $R_f \subset Y$
\item $B_f \subset Y$
\item $| R_f |\geq 3$
\end{enumerate}
\end{prop}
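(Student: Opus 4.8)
The plan is to use the rigidity of morphisms with prescribed critical behavior, following Mori's computation of the tangent space to the space of morphisms $\PP^1 \to \PP^1$. First I would reduce to counting morphisms of a fixed degree $d$: the conditions $R_f \subset Y$ and $B_f \subset Y$ bound the degree of $f$, since $R_f$ is cut out by the Wronskian $w_f$, which has degree $2d-2$, so in particular $|R_f| \leq 2d-2 \leq |Y|$; this gives finitely many possible values of $d$, and it suffices to treat each $d$ separately. For fixed $d$, I would work inside the space $\mathrm{Rat}_d$ of degree-$d$ rational maps (an open subscheme of the space of pairs of forms $\mathrm{Form}_d$ from Section \ref{s:Equations}), and consider the subscheme $Z \subset \mathrm{Rat}_d$ of maps $f$ with $R_f$ and $B_f$ supported on $Y$. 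Since $Y$ is finite and the conditions ``$\operatorname{Supp} R_f \subset Y$'' and ``$\operatorname{Supp} B_f \subset Y$'' are closed (the coefficients of $w_f$ and $b_f$ must vanish at the complement of $Y$, or rather $w_f$ and $b_f$ must be products of linear forms vanishing at points of $Y$), $Z$ is a closed subscheme of $\mathrm{Rat}_d$, hence of finite type over $K$. It thus suffices to show $Z$ is zero-dimensional, i.e. that every point of $Z$ is isolated.

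The key step is the tangent space computation. A first-order deformation of $f: \PP^1 \to \PP^1$ is a section of $f^* T_{\PP^1}$, i.e. an element of $H^0(\PP^1, f^*\mathcal{O}(2)) = H^0(\PP^1, \mathcal{O}(2d))$. The deformations preserving the constraint that the ramification (with multiplicity, i.e. the divisor $\mathcal{R}_f$ which has degree $2d-2$) stays set-theoretically inside $Y$ are governed by sections that vanish appropriately along $R_f$; more precisely, Mori's argument (see \cite{Mori} or the treatment via $H^0(\PP^1, T_{\PP^1}(-\mathcal{R}_f))$) shows that a morphism with enough ramification is rigid. Concretely: a nontrivial deformation of $f$ within maps of degree $d$ that moves no ramification point must come from $H^0(\PP^1, T_{\PP^1}(-\mathcal{R}_f)) = H^0(\PP^1, \mathcal{O}(2 - (2d-2))) = H^0(\PP^1, \mathcal{O}(4-2d))$, which vanishes for $d \geq 3$ and is $1$-dimensional (spanned by a vector field) for $d=2$. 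To kill the remaining possibility I would use that the constraint is not merely ``$R_f$ fixed'' but ``$R_f \subset Y$ and $B_f \subset Y$ with $|R_f| \geq 3$'': after modding out by the (finite-dimensional) reparametrizations, the deformations compatible with moving the finitely many ramification/branch points only among the finitely many points of $Y$ form a discrete set, since $Y$ is discrete — the ramification divisor of a nearby map varies continuously, so if it is constrained to the finite set $Y$ it cannot move at all, reducing us to the $H^0(T_{\PP^1}(-\mathcal{R}_f))$ computation above. The hypothesis $|R_f| \geq 3$ is what makes $2d - 2 \geq 3$ fail to be the obstruction; more carefully, even when $d = 2$ and the tangent space has the extra dimension from the infinitesimal automorphism, that automorphism direction is precisely the one coming from pre/post-composition with $\Aut(\PP^1)$, and once we fix the images of three ramification points (possible since $|R_f| \geq 3$) this is rigidified away.

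I expect the main obstacle to be making precise the passage from ``$R_f$ and $B_f$ are constrained to lie in the finite set $Y$'' to ``$R_f$ and $B_f$ literally cannot move'', and feeding this correctly into the tangent space calculation so as to conclude $Z$ is zero-dimensional rather than merely that its tangent spaces are small. One clean way to organize this: show the map $Z \to (\text{symmetric powers of } Y)$ sending $f$ to $(\mathcal{R}_f, \mathcal{B}_f)$ has finite image (trivially, $Y$ being finite) and finite fibers; the fiber over a fixed pair of divisors $(\mathcal{R}, \mathcal{B})$ consists of maps with that exact ramification and branch divisor, and here one invokes the rigidity statement that, modulo the finite group/bounded-dimensional $\Aut(\PP^1) \times \Aut(\PP^1)$ action (cut down to a point by marking three ramification points and their images), such maps are rigid, i.e. the relevant $H^0$ of the normal sheaf vanishes by the degree count $H^0(\PP^1, \mathcal{O}(4-2d)) = 0$ for $d \geq 3$ together with a separate direct argument for $d = 2$ using the explicit normal form $F = [ax_0^2 + cx_1^2 : dx_0^2 + fx_1^2]$ from the example in Section \ref{s:Equations}. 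Assembling these pieces — degree bound, closedness of $Z$, finiteness of the divisor data, and fiberwise rigidity — gives the finiteness of the set of such $f$.
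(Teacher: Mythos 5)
Your overall route via Mori's tangent-space theorem for $\Hom(\PP^1,\PP^1;p)$ matches what the paper sketches (deferring to Proposition 4.2 of \cite{ST} for the full argument), but the degree-bound step at the start is incorrect and is a genuine gap. From $R_f \subset Y$ you get $|R_f| \leq |Y|$, and from the Wronskian having degree $2d-2$ you get $|R_f| \leq 2d-2$; these two inequalities do not combine to give $2d-2 \leq |Y|$ (that would require the Wronskian to have distinct roots, which is not assumed). In fact the hypotheses of the proposition do not bound $d$ at all: take $Y=\{0,1,\infty\}$, and for each $d\geq 3$ let $f_d(z)=z^d/q_d(z)$, where $q_d$ is the unique (up to scalar) polynomial of degree $d-2$ with $dq_d - zq_d' = c(z-1)^{d-2}$, normalized so that $f_d(1)=1$ (for instance $f_3(z)=z^3/(3z-2)$, $f_4(z)=z^4/(6z^2-8z+3)$). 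One checks that $f_d$ has ramification indices $(d,d-1,2)$ at $(0,1,\infty)$, so $R_{f_d}=B_{f_d}=\{0,1,\infty\}$ and $|R_{f_d}|=3$ for every $d$. The proposition is only invoked in the proof of Theorem \ref{thm:Main} with the degree $d$ fixed (it is fixed in $\Sh(d,K,S)$), and should be read with $d$ fixed; your argument should begin by fixing $d$ rather than claiming a bound that does not hold.

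A secondary issue is the tangent-space formula. You write $H^0(\PP^1, T_{\PP^1}(-\mathcal{R}_f)) = H^0(\PP^1,\mathcal{O}(4-2d))$, but Mori's theorem gives $H^0(\PP^1, f^*T_{\PP^1}\otimes\mathfrak{I}_Z)$; with $Z=\mathcal{R}_f$ this is $H^0(\PP^1,\mathcal{O}(2d-(2d-2)))=H^0(\PP^1,\mathcal{O}(2))$, which has dimension $3$, not $0$. Replacing $f^*T_{\PP^1}=\mathcal{O}(2d)$ by $T_{\PP^1}=\mathcal{O}(2)$ is not justified (what you computed is the space of vector fields on the source vanishing on $\mathcal{R}_f$, not the tangent space to the Hom-scheme). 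The passage from a $3$-dimensional tangent space to finiteness of $\Hom(\PP^1,\PP^1;p)$ is precisely the delicate part, involving the observation that these first-order deformations (which are exactly $df$ applied to infinitesimal precompositions) are obstructed; this is the content buried in Proposition 4.2 of \cite{ST}, which neither your write-up nor the paper's brief sketch makes explicit.
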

\begin{proof}
This is proved as Proposition 4.2 in \cite{ST}.  We just note that the proof works by application of the following theorem of Mori, which computes the tangent space to the scheme of morphisms between two varieties  \cite{Mori}: 

\begin{thm}
Let $A$ and $B$ be schemes of finite type over a  field $k$ and let $Z$ be a closed subscheme of $A$. Let $p: Z \to B$ be a $k$-morphism. Denote by $\Hom_k(A,B;p)$ the set of $k$-morphisms from $f: A\to B$ that extend $p$; i.e. $f|_Z=p$. Denote the ideal sheaf of $Z$ in $A$ by $\mathfrak{I}_Z$. For a scheme $X/k$ denote  the tangent space of $X$ over  $k$ by $T_X$. Then $\Hom_k(A,B;p)$ is represented by a closed subscheme of $\Hom_k(A,B)$ and for any closed point $f$ of $\Hom_k(A,B;p)$, we have 
\[
T_{\Hom_k(A,B;p),f}\isom H^0(A, f^*T_B\otimes_{\mathcal{O}_A} \mathfrak{I}_Z)
\]
\end{thm}

We take $A$ and $B$ to be $\PP^1$ and $Z$ is the ramification divisor. In this case, the dimension of the tangent space turns out to be zero, so there are at most finitely many morphisms with the specified ramification. 
\end{proof}

\section{Proof of Main Theorem}

In this section we keep the hypotheses of Theorem \ref{thm:Main}.

\begin{lem}\label{lem:nonisotrivial}
Suppose $f/K$ is a rational map with at least three distinct ramification points whose critical locus $C_f$ is not admissible (when considered as a reduced effective divisor on $\PP^1_K$). Then $f$ is not admissible.  
\end{lem}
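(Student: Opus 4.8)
The plan is to prove the contrapositive: assuming $f$ is admissible, show that $C_f$ is admissible as a reduced effective divisor. Recall that, over a function field $K=k(X)$ with $k$ algebraically closed, admissibility of $f$ means that $f$ is non-isotrivial and that the classifying map $\alpha\colon U\to M_d^{\mathrm{dyn}}$ associated to some (equivalently any) model $\mathfrak f$ with D.G.R.\ over an open $U\subset X$ is separable. Admissibility of the divisor $C_f$ means that the classifying map $\beta\colon X^\circ\to M_{0,m}$ induced by the tuple of sections cutting out the critical locus (where $m=|C_f|\geq 3$) is non-constant and separable. So the task reduces to comparing these two classifying maps.

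The key observation is that there is a natural morphism $c\colon M_d^{\mathrm{dyn}}\dashrightarrow M_{0,m}/\mathrm{Sym}$, at least on the locus of maps with exactly $m$ distinct critical points, sending a dynamical system $[g]$ to the unordered configuration of its critical locus $C_g$; shrinking $U$ so that $\mathfrak f$ is differentially separated over $U$ and all critical points are $U$-rational, we get (after an \'etale base change to order the sections, which does not affect separability or non-constancy) a commutative diagram in which $\beta$ factors, up to the \'etale cover, as $c\circ\alpha$. First I would make this precise: on $U$, the critical locus $C_{\mathfrak f}$ is \'etale over $U$ by D.G.R., so it is given by disjoint sections $\sigma_i\colon U\to\PP^1_U$ after a finite \'etale base change $U'\to U$; these sections define $\beta\colon U'\to M_{0,m}$, and by construction the class $[\mathfrak f_v]_k$ determines the configuration $\{\sigma_i(v)\}$ up to $\PGL_2(k)$, i.e.\ determines $\beta(v)$ up to the finite symmetric-group action. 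Thus $\beta$ factors through $\alpha$ composed with a fixed morphism of varieties $c$.

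Two things then need to be checked. First, non-constancy of $\beta$: if $\beta$ were constant, then the critical configuration would be constant along $U'$, but combined with the fact that a rational map of degree $d$ is determined up to finitely many choices by its critical locus (this is essentially Proposition \ref{prop:rigidity}, or rather the rigidity it encodes: finitely many $[g]$ with $C_g$ a fixed configuration), $\alpha$ would then be constant too, contradicting non-isotriviality of $f$. Here one must be slightly careful because Proposition \ref{prop:rigidity} is stated over a field, so I would apply it fiberwise, or invoke that a family of maps over $U'$ with constant critical locus lies in a finite set, forcing $\alpha$ constant after possibly further \'etale base change. Second, separability of $\beta$: since $\beta$ factors (up to an \'etale, hence separable, cover) as $c\circ\alpha$ with $\alpha$ separable and $c$ a morphism of varieties over $k$, separability of $\beta$ follows provided $c$ is separable on the relevant locus; generic separability of $c$ holds since we are in characteristic $p>2d-2$, which exceeds the degree $2d-2$ of the ramification divisor, so the critical-locus construction is \'etale-generically well-behaved and the induced map on moduli is generically \'etale.

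The main obstacle I anticipate is bookkeeping around the \'etale base changes and the symmetric-group quotient: making sure that ``admissible'' for the unordered divisor $C_f$ (which is what Definition \ref{def:AdmissibleTuple} and the surrounding discussion really control, via an arbitrary ordering after passing to $\ri T$) matches up cleanly with separability of $\alpha$ through the correspondence $c$, without losing separability when passing between $U$, $U'$, and the moduli quotients. A secondary subtlety is the characteristic $p$ hypothesis: one must use $p>2d-2$ to ensure the Wronskian/ramification-divisor formation commutes with reduction and that no inseparability is introduced by the critical-locus map itself, so that the only source of inseparability in $\beta$ would have to come from $\alpha$ — which is exactly what we are assuming does not happen. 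Once these compatibilities are nailed down, the contrapositive is immediate.
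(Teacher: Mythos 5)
Your proof follows essentially the same route as the paper: compare the two classifying maps $\alpha\colon U\to M_d^{\mathrm{ds}}$ and $\beta\colon U\to M_{m,0}^{\mathrm{sym}}$ through the forgetful map $\pi$ sending a dynamical system to its critical configuration, use Proposition \ref{prop:rigidity} to get quasi-finiteness of $\pi$ (hence transfer of non-constancy), and invoke the characteristic hypothesis to transfer separability; the stratification by $|C_f|=m$ that you build in from the start is handled by the paper with a brief remark at the end. One small point in your favor: the paper's proof asserts separability of $\pi$ under the hypothesis ``characteristic zero or greater than $4d-4$,'' whereas Theorem \ref{thm:Main} only assumes $p>2d-2$, which is the bound you use; your reasoning is thus more consistent with the stated hypothesis, though both arguments treat the separability of $\pi$ somewhat informally.
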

\begin{proof}
Assume for simplicity that $f$ is differentially separated and of degree $d$. Let $M_d^{\mathrm{ds}}$ denote the dense open subset of the moduli space $M_d^{\mathrm{ds}}$ or rational maps of degree $d$ that are differentially separated. Let $n = 4d-4$. Let $M_{n,0}^{\mathrm{sym}}$ denote the moduli space of $n$ unordered points on $\PP^1$. Choosing a model with differential good reduction of $f$ over an open set $U$ of $X$, we get modular maps $\alpha:\: U\to  M_d^{\mathrm{ds}}$ and $\beta :\: U\to M_{n,0}^{\mathrm{sym}}$. Admissibility of $f$ amounts to $\alpha$ being a non-constant separable map onto its image; admissibility of $C_f$ amounts to $\beta$ being a non-constant separable map onto its image. By Proposition \ref{prop:rigidity}, the map $\pi:\;M_d^{\mathrm{ds}} \to M_{n,0}^{\mathrm{sym}}$, that takes the class $C_f$ of a rational map to the class of $\mathrm{Supp}(C_f)$, is quasi-finite onto its image; moreover its restriction to the image of $\alpha$ is separable, since by hypothesis, the characteristic is zero or greater than $4d-4$. Since $\pi, \alpha$ and $\beta$ form a commuting triangle, the condition that $\beta$ be non-constant and separable onto its image is equivalent to the same condition on $\alpha$. 

If $f$ is not differentially separated, say $|C_f|=m<4d-4$, restrict to the stratum of $M_d$ consisting of such maps and apply the same argument as that just given, with $m$ in place of $n$. 


\end{proof}

\begin{rem}
In \cite{PetscheSzpiroTepper} it is proved that a rational map with everywhere potential good reduction is isotrivial. We could have used this fact, to prove the preceding lemma in the case that $\Char K$ is zero.
\end{rem}

\begin{proof}[Proof of Theorem \ref{thm:Main}]
 Let $F=[f]_K$ be an element of $\Sh(d,K,S)$. Then $f$ is admissible, whence, by Lemma \ref{lem:nonisotrivial}, $C_f$ is an admissible reduced effective divisor. $C_f$ has degree at most $4d-4$ and splitting degree over $K$ at most $(2d-2)!$, since the degree of $R_f$ is at most $2d-2$ and $B_f=f(R_f)$ with $f$ defined over $K$. Moreover, $C_f$ has good reduction outside of $S$. So $C_f$ is an element of $\Sh_\mathrm{div}^\star(4d-4,(2d-2)!, K,S)$. By Theorem \ref{thm:finiteness}, $C_f$ belongs to one of at most finitely many $K$-equivalence classes. Let $\mcC$ be a set of representatives for such equivalence classes and let $Y=\cup_{C\in\mcC} \overline{C}$. Thus, for every $F$ in $\Sh(d,K,S)$, we can write $F=[f]_K$ for some rational map $f/K$ for which $C_f\subset Y$. Now the finiteness of $\Sh(d,K,S)$ follows from Proposition \ref{prop:rigidity}.    
\end{proof}

\bibliography{shafarevich_bib}

\end{document}